\newtheorem{teo}{Theorem}[section]
\newtheorem{lema}{Lemma}[section]
\begin{document}

\title[ultraspherical Riesz transforms]%
{Higher order Riesz transforms in the ultraspherical setting as
principal value integral operators}

\subjclass[2000]{42C05 (primary), 42C15 (secondary)} \keywords{}
\begin{abstract}
In this paper we represent the $k$-th Riesz transform in the
ultraspherical setting as a principal value integral operator for
every $k \in \mathbb N$. We also measure the speed of convergence
of the limit by proving $L^p$-boundedness properties for the
oscillation and variation operators associated with the
corresponding truncated operators.

\end{abstract}

\author[J. Betancor]{Jorge J. Betancor}
\address{Departamento de An\'{a}lisis Matem\'{a}tico\\
Universidad de la Laguna\\
Campus de Anchieta, Avda. Astrof\'{\i}sico Francisco S\'{a}nchez, s/n\\
38271 La Laguna (Sta. Cruz de Tenerife), Spain}
\email{jbetanco@ull.es; jcfarina@ull.es; lrguez@ull.es}

\author[J.C. Fari\~{n}a]{Juan C. Fari\~{n}a}

\author[L. Rodr\'{\i}guez-Mesa]{Lourdes Rodr\'{\i}guez-Mesa}

\author[R. Testoni]{Ricardo Testoni}
\address{Departamento de Matem\'atica\\
Universidad Nacional del Sur\\
Avda. Alem 1253 - 2º Piso\\
8000 Bah\'{\i}a Blanca, Buenos Aires, Argentina}
\email{ricardo.testoni@uns.edu.ar}

\thanks{This paper is partially supported by MTM2007/65609.}
\date{\today}

\maketitle

\section{Introduction}

Muckenhoupt and Stein \cite{MS} introduced a notion of conjugate
functions associated with ultraspherical expansions. In this setting
the conjugate function appears as a boundary value of a conjugate
harmonic extension associated with a suitable Cauchy-Riemann type
equations.

Assume that $\lambda >0$. For every $n \in \mathbb N$, we denote by
$P_n^\lambda$ the ultraspherical polynomial of degree $n$
(\cite{Sz}). These polynomials are defined by the generating
relation
$$
(1-2tw+w^2)^{-\lambda} = \sum^\infty_{k=0} w^k P_k^\lambda(t).
$$
The sequence $\{P_n^\lambda(\cos\theta)\}_{n\in \mathbb N}$ is
orthogonal and complete in the space 
$L^2((0,\pi),dm_\lambda (\theta ))$, where $dm_\lambda (\theta )=(\sin\theta)^{2\lambda}d\theta$. When $2\lambda=k-2$,
with $k \in \mathbb N$, the $\lambda$-ultraspherical polynomial
$P_n^\lambda$, $n\in \mathbb N$, arises in the Fourier analysis of
functions in the surface of the $n$-Euclidean space sphere that are
invariant under the rotations leaving a given axis fixed.

For every $n \in \mathbb N$, $P_n^\lambda(\cos\theta)$ is an
eigenfunction of the operator
$$
L_\lambda=-\frac{d^2}{d\theta^2}-2\lambda \cot \theta \frac{d}{d\theta}+ \lambda^2,
$$
associated with the eigenvalue $\mu_n=(n+\lambda)^2$. The operator
$L_\lambda$ can be written as follows
$$
L_\lambda=-\left(\frac{d}{d\theta}\right)^*\frac{d}{d\theta} + \lambda^2,
$$
where $\displaystyle\left(\frac{d}{d\theta}\right)^*=\frac{d}{d\theta}+2\lambda\cot\theta$ denotes the formal adjoint of $\displaystyle\frac{d}{d\theta}$ in 
$L^2((0,\pi),dm_\lambda (\theta ))$.

In \cite{BMTU} Buraczewski, Mart{\'\i}nez, Torrea and Urban defined
a Riesz transform in the ultraspherical setting associated to
$L_\lambda$. Note that this operator $L_\lambda$ is slightly
different than the one considered by Muckenhoupt and Stein (see
\cite[p. 23]{MS}). In \cite{BMTU} the authors follow the ideas
developed in the monography of Stein \cite{Stein}.

Suppose that $f \in L^2((0,\pi),dm_\lambda (\theta ))$. The ultraspherical expansion of $f$ is
$$
f(\theta)=\sum^\infty_{n=0} a_n^\lambda(f)\frac{P_n^\lambda(\cos\theta)}{\|P_n^\lambda(\cos\cdot)\|}_{L^2((0,\pi),dm_\lambda (\theta ))},
$$
where, for every $n \in \mathbb N$,
$$
a_n^\lambda(f)= \int_0^\pi
f(\theta)\frac{P_n^\lambda(\cos\theta)}{\|P_n^\lambda(\cos\cdot)\|_{L^2((0,\pi),dm_\lambda (\theta ))}}dm_\lambda (\theta ).
$$
The Poisson integral $P_t^\lambda(f)$, $t >0$, is given by
$$
P_t^\lambda(f)(\theta)=
e^{-t\sqrt{L_\lambda}}f(\theta)=\sum^\infty_{n=0}a_n^\lambda(f)e^{-t(n+\lambda)}
\frac{P_n^\lambda(\cos\theta)}{\|P_n^\lambda(\cos\cdot)\|_{L^2((0,\pi),dm_\lambda (\theta ))}},\quad
t>0.
$$
According to \cite[(2.12)]{MS} we can write
\begin{equation}\label{Poisson}
P_t^\lambda f(\theta) = \int_0^\pi r^\lambda
P_\lambda(e^{-t},\theta,\varphi)f(\varphi)dm_\lambda (\varphi ),\quad t>0,
\end{equation}
where, for each $0<r<1$ and $\theta,\varphi\in (0,\pi)$,
$$
P_\lambda(r,\theta,\varphi)=
\frac{\lambda}{\pi}(1-r^2)\int^\pi_0\frac{\sin^{2\lambda-1}t}{(1-2r(\cos\theta\cos\varphi+\sin
\theta \sin\varphi\cos t) + r^2)^{\lambda+1}}dt.
$$
The $L^p$-boundedness properties for these Poisson integrals and the
corresponding maximal operator were established in \cite[Theorem
2.4]{BMTU} (see also \cite[Theorem 2]{MS}). For every $\alpha >0$,
the fractional power $L_\lambda^{-\alpha}$ of the operator
$L_\lambda$ is defined by
$$
L^{-\alpha}_\lambda f(\theta)=\frac{1}{\Gamma(2\alpha)}\int_0^\infty e^{-t\sqrt{L_\lambda}}f(\theta) t^{2\alpha -1}dt,\quad f \in L^2((0,\pi),dm_\lambda (\theta )).
$$
By using (\ref{Poisson}) we get, for every $\alpha >0$ and $f\in
L^2((0,\pi),dm_\lambda (\theta ))$,
\begin{eqnarray*}
L_\lambda^{-\alpha}f(\theta) &=& \frac{1}{\Gamma(2\alpha)}
\int_0^{\infty}\int_0^\pi
P^\lambda(e^{-t},\theta,\varphi)f(\varphi)dm_\lambda (\varphi )
t^{2\alpha-1} dt \\
&=& \int_0^\pi
f(\varphi)\frac{1}{\Gamma(2\alpha)}\int_0^1P^\lambda(r,\theta,\varphi)\left(\log\frac{1}{r}\right)^{2\alpha-1}\frac{1}{r}drdm_\lambda (\varphi ).
\end{eqnarray*}
Following \cite{Stein} the Riesz transform of order $k\in \mathbb
N$, $R_\lambda^k$, is defined as
$$
R_\lambda^kf= \frac{d^k}{d\theta^k}L_\lambda^{-\frac{k}{2}}f,
$$
when $f$ is a nice function (for instance, $f \in \mbox{span}
\{P_n^\lambda(\cos\theta)\}_{n\in \mathbb{N}}$ or $f$ is a smooth
function with compact support on $(0,\pi)$).

It was proved in \cite[Theorem 2.14]{BMTU} (when $k=1$) and
\cite[Theorem 1.4]{BMT} (when $k>1$) that the operator $R_\lambda^k$
can be extended to
$L^p((0,\pi),w(\theta)dm_\lambda (\theta ))$ as a bounded
operator from $L^p((0,\pi),w(\theta)dm_\lambda (\theta ))$
into itself, when $1<p<\infty$ and $w \in A_\lambda^p$, and as a
bounded operator from
$L^1((0,\pi),w(\theta)dm_\lambda (\theta ))$ into
$L^{1,\infty}((0,\pi),w(\theta)dm_\lambda (\theta ))$,
when $w \in A_{\lambda}^1.$ Here, for every $1 \leq p < \infty$, by
$A_\lambda^p$ we denote the Muckenhoupt class of weights associated
with the doubling measure $dm_\lambda (\theta )$ on
$(0,\pi)$.

In this paper we prove that the $k$-th Riesz transform $R^k_\lambda$ is a principal value integral operator, for every $k\in \mathbb N$. We extend \cite[Theorem 2.13]{BMTU} where the result is shown for $k=1$.
\begin{teo}\label{thprin}
Let $\lambda >0$ and $k\in \mathbb N$. For every $1\leq p < \infty$ and $\omega \in A_\lambda^p$, we have that if $f \in L^p((0,\pi),w(\theta)dm_\lambda (\theta ))$
\begin{equation}\label{A1}
R_\lambda^kf(\theta)= \lim_{\varepsilon \rightarrow
0^+}\int^\pi_{0,\,|\theta-\varphi|>\varepsilon}
R_\lambda^k(\theta,\varphi)f(\varphi)dm_\lambda (\varphi )
+ \gamma_kf(\theta),\quad \mbox{a.e. }\theta \in (0,\pi),
\end{equation}
where
$$
R_\lambda^k(\theta,\varphi)= \frac{1}{\Gamma(k)}\int_0^1
\frac{\partial^k}{\partial \theta^k}P_\lambda(r,\theta,\varphi)
\left(\log\frac{1}{r}\right)^{k-1} r^{\lambda-1}dr,\quad
\theta,\varphi \in (0,\pi),
$$
and $\gamma_k=0$, when $k$ is odd, and $\gamma_k=(-1)^{\frac{k}{2}}$,
when $k$ is even.
\end{teo}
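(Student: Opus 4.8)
The plan is to follow the classical route for representing a ``derivative of a fractional integral'' as a principal value plus a multiple of the identity, with all estimates carried out in the space of homogeneous type $((0,\pi),|\cdot|,dm_\lambda)$. \emph{Reduction.} First I would reduce \eqref{A1} to the case $f\in C_c^\infty(0,\pi)$ and every $\theta\in(0,\pi)$; the transfer to a general $f\in L^p((0,\pi),w\,dm_\lambda)$ is the usual $\limsup$-argument, provided the maximal operator $R_{\lambda,*}^k f(\theta)=\sup_{\varepsilon>0}\bigl|\int_{0,\,|\theta-\varphi|>\varepsilon}^{\pi}R_\lambda^k(\theta,\varphi)f(\varphi)\,dm_\lambda(\varphi)\bigr|$ is bounded on $L^p((0,\pi),w\,dm_\lambda)$. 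Granting the $L^2$-boundedness of $R_\lambda^k$ (the case $p=2$, $w\equiv1$ of \cite{BMTU,BMT}), this follows from Calderón--Zygmund theory once one establishes the standard size and smoothness estimates for $R_\lambda^k(\theta,\varphi)$ (for $k$ odd; for $k$ even $R_\lambda^k(\theta,\varphi)$ turns out to be locally integrable with ``one order better'' decay, and the maximal operator is then dominated by an absolutely convergent integral operator). These estimates come by differentiating the integral representation
$$P_\lambda(r,\theta,\varphi)=\frac{\lambda}{\pi}(1-r^2)\int_0^\pi\frac{\sin^{2\lambda-1}s}{D(r,\theta,\varphi,s)^{\lambda+1}}\,ds,\qquad D=(1-r)^2+2r\bigl(1-\cos\theta\cos\varphi-\sin\theta\sin\varphi\cos s\bigr),$$
up to order $k$ in $\theta$ and estimating with the help of $D\gtrsim(1-r)^2+|\theta-\varphi|^2+(1-\cos s)\sin\theta\sin\varphi$; this is the analogue for the $k$-th $\theta$-derivative of the computations of \cite{BMTU} for $k=1$.

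\emph{Localization.} Fix $f\in C_c^\infty(0,\pi)$ and $\theta\in(0,\pi)$. Since $P_t^\lambda f$ is smooth in $\theta$ and $a_n^\lambda(f)$ decays rapidly, $R_\lambda^k f(\theta)=\frac1{\Gamma(k)}\int_0^\infty t^{k-1}\partial_\theta^kP_t^\lambda f(\theta)\,dt$. For $\delta>0$ set $K_\delta(\theta,\varphi)=\frac1{\Gamma(k)}\int_0^{e^{-\delta}}r^{\lambda-1}(\log\frac1r)^{k-1}\partial_\theta^kP_\lambda(r,\theta,\varphi)\,dr$; for fixed $\delta$ Fubini's theorem applies and gives $R_\lambda^k f(\theta)=\lim_{\delta\to0^+}\int_0^\pi K_\delta(\theta,\varphi)f(\varphi)\,dm_\lambda(\varphi)$. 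Splitting this integral at $|\theta-\varphi|=\varepsilon$, on the part $|\theta-\varphi|>\varepsilon$ the estimate $|\partial_\theta^kP_\lambda(r,\theta,\varphi)|\lesssim_\varepsilon 1-r$ allows passing to the limit $\delta\to0^+$ inside, so that
$$R_\lambda^k f(\theta)-\int_{0,\,|\theta-\varphi|>\varepsilon}^{\pi}R_\lambda^k(\theta,\varphi)f(\varphi)\,dm_\lambda(\varphi)=\lim_{\delta\to0^+}\int_{0,\,|\theta-\varphi|\le\varepsilon}^{\pi}K_\delta(\theta,\varphi)f(\varphi)\,dm_\lambda(\varphi).$$
Writing $f(\varphi)=f(\theta)+(f(\varphi)-f(\theta))$ and using the size estimate, the difference term contributes $O(\varepsilon)$, so the right side equals $f(\theta)\,c_\varepsilon(\theta)+O(\varepsilon)$ with $c_\varepsilon(\theta)=\lim_{\delta\to0^+}\int_{|\theta-\varphi|\le\varepsilon}K_\delta(\theta,\varphi)\,dm_\lambda(\varphi)$. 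Since $R_\lambda^k\mathbf 1=\partial_\theta^k(L_\lambda^{-k/2}\mathbf 1)=\partial_\theta^k(\lambda^{-k}\mathbf 1)=0$ and $R_\lambda^k$ applied to $\mathbf 1_{\{|\theta-\varphi|>\varepsilon\}}$ may be computed by differentiating under the integral sign (the data being away from $\theta$), one identifies $c_\varepsilon(\theta)=R_\lambda^k\bigl(\mathbf 1_{(\theta-\varepsilon,\theta+\varepsilon)}\bigr)(\theta)$, and everything reduces to showing $\lim_{\varepsilon\to0^+}R_\lambda^k\bigl(\mathbf 1_{(\theta-\varepsilon,\theta+\varepsilon)}\bigr)(\theta)=\gamma_k$.

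\emph{Computation of $\gamma_k$.} Put $u(t,\vartheta)=P_t^\lambda(\mathbf 1_{I_\varepsilon})(\vartheta)$, $I_\varepsilon=(\theta-\varepsilon,\theta+\varepsilon)$, so that $\partial_t^2u=L_\lambda u$, i.e. $\partial_\vartheta^2u=-\partial_t^2u+\mathcal Ru$ with $\mathcal R=-2\lambda\cot\vartheta\,\partial_\vartheta+\lambda^2$. Since $\partial_t^2$ commutes with $\mathcal R$, an induction gives
$$\partial_\vartheta^{2m}u=(-1)^m\partial_t^{2m}u+\sum_{j=0}^{m-1}(-1)^j\partial_t^{2j}\bigl[(\partial_\vartheta^2)^{m-1-j}\mathcal R\,u\bigr],$$
and, applying one more $\partial_\vartheta$, the corresponding identity for $k=2m+1$ with leading term $(-1)^m\partial_t^{2m}\partial_\vartheta u$. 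Inserting this into $\frac1{\Gamma(k)}\int_0^\infty t^{k-1}(\,\cdot\,)|_{\vartheta=\theta}\,dt$: for the leading term, integrating by parts $k-1$ times in $t$ (all boundary terms vanish since $u$ and its derivatives decay as $t\to\infty$ and stay bounded near $\theta$ as $t\to0$, where $\mathbf 1_{I_\varepsilon}\equiv1$) reduces it to $(-1)^{k/2}u(0,\theta)=(-1)^{k/2}$ when $k$ is even, giving $\gamma_k$, and to a constant multiple of $\int_0^\infty\partial_\vartheta u(t,\theta)\,dt=R_\lambda^1(\mathbf 1_{I_\varepsilon})(\theta)$ when $k$ is odd, which tends to $0$ as $\varepsilon\to0^+$ by the case $k=1$ (\cite[Theorem 2.13]{BMTU}), matching $\gamma_k=0$. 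Each remainder term, after $2j<k-1$ integrations by parts in $t$, becomes a constant multiple of $\bigl[(\partial_\vartheta^2)^{m-1-j}\mathcal R\bigr]\bigl(L_\lambda^{-\alpha_j}\mathbf 1_{I_\varepsilon}\bigr)(\theta)$ for some $\alpha_j>0$; the $\vartheta$-order of $(\partial_\vartheta^2)^{m-1-j}\mathcal R$ (for $k=2m+1$, of $\partial_\vartheta(\partial_\vartheta^2)^{m-1-j}\mathcal R$) is exactly one less than $2\alpha_j$, so by the Step-1 estimates the kernel of this composition is $O(1+|\log|\theta-\varphi||)$ near the diagonal, hence integrating it over the shrinking interval $I_\varepsilon$ yields $o(1)$. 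Collecting, $\lim_{\varepsilon\to0^+}c_\varepsilon(\theta)=\gamma_k$, and with the Localization step this gives \eqref{A1} for $f\in C_c^\infty(0,\pi)$; the Reduction step finishes the proof.

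\emph{Main obstacle.} The two delicate points are the uniform bounds for the high-order $\theta$-derivatives of the Poisson kernel (lengthy, but patterned on \cite{BMTU}), and, conceptually, the bookkeeping in the last step: one must verify that the ``flat'' part of $\partial_\vartheta^k$ (the iterated $\partial_t^2$) produces \emph{exactly} $\gamma_k$ after the integrations by parts in $t$, and that every term carrying a $\cot\vartheta$ coefficient is genuinely of lower (negative) order and therefore disappears as $\varepsilon\to0^+$.
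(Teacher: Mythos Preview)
Your approach is essentially correct and takes a genuinely different route from the paper's.

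\textbf{What the paper does.} The paper compares $R_\lambda^k(\theta,\varphi)$, near the diagonal, with the kernel $R^k(\theta,\varphi)/(\sin\theta\sin\varphi)^\lambda$ of the $k$-th Riesz transform on the circle, showing that the difference is $O\bigl((\sin\varphi)^{-2\lambda-1}(1+\sqrt{\sin\varphi/|\theta-\varphi|})\bigr)$ (Lemma~\ref{lema}). It then proves the principal-value representation for the \emph{circle} transform $R^k$ directly, computing $\gamma_k$ by analysing $\lim_{w\to 0^+}H^k(w)$ via Fa\`a di Bruno's formula and a Beta-function identity (Step~3), and finally transfers back to $R_\lambda^k$ through the kernel comparison.

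\textbf{What you do differently.} You bypass the comparison with the circle transform and the explicit combinatorics. Using $\partial_t^2 u=L_\lambda u$ you write $\partial_\vartheta^2=-\partial_t^2+\mathcal R$ and iterate, turning $\partial_\vartheta^k$ into $(-1)^{\lfloor k/2\rfloor}\partial_t^{2\lfloor k/2\rfloor}$ (times $\partial_\vartheta$ if $k$ is odd) plus lower-order remainders carrying $\mathcal R$. After $t$-integrations by parts the leading term evaluates to $(-1)^{k/2}u(0,\theta)=\gamma_k$ (or $R_\lambda^1(\mathbf 1_{I_\varepsilon})(\theta)\to 0$ if $k$ is odd), and each remainder is a differential operator of order $2\alpha_j-1$ applied to $L_\lambda^{-\alpha_j}\mathbf 1_{I_\varepsilon}$, whose kernel is locally integrable, so its integral over $I_\varepsilon$ tends to $0$. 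This is cleaner conceptually: $\gamma_k$ drops out of the semigroup structure with no computation.

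\textbf{What each approach buys.} Both require the Calder\'on--Zygmund size/regularity of $R_\lambda^k(\theta,\varphi)$ (for the maximal-operator reduction) and the ``Step~1''-type bounds on $\partial_\theta^\ell L_\lambda^{-k'/2}$ with $\ell\le k'-1$ (the paper uses them to differentiate under the integral; you use them for the remainder terms). The paper's route additionally yields the sharp kernel comparison (Lemma~\ref{lema}), which is exactly what is needed later for the oscillation/variation result (Theorem~\ref{oscilacion}); your route does not produce this as a by-product.

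\textbf{Two small corrections.} First, the remark that for $k$ even ``$R_\lambda^k(\theta,\varphi)$ turns out to be locally integrable'' is not available \emph{a priori}: in the paper it is a \emph{consequence} of Theorem~\ref{thprin} (via Lemma~\ref{lemosc}). You don't need it anyway---the maximal bound follows from \cite[Theorem~1.5]{BMT} for all $k$. Second, the remainder kernels are $O(|\theta-\varphi|^{-1/2})$ near the diagonal (this is precisely (\ref{acotR}) with $\ell=k'-1$), not $O(1+|\log|\theta-\varphi||)$; either bound is integrable over $I_\varepsilon$, so your conclusion is unaffected.
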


The complete proof of this theorem is presented in Section 2. It is
a crucial point in the proof the estimates established in Lemma
\ref{lema} below. We prove in this lemma that in the local region,
that is, close to the diagonal $\{\theta=\varphi\}$, the kernel
$R_\lambda^k(\theta,\varphi)$ differs from the kernel of the $k$-th
Euclidean Riesz transform by an integrable function. Also, we show
that far from the diagonal $R_\lambda^k(\theta,\varphi)$ is bounded
by Hardy type kernels.

Suppose that $\{T_\varepsilon\}_{\varepsilon >0}$ is a family of
operators defined on $L^p(\Omega,\mu)$, for some measure space
$(\Omega,\mu)$ and $1 \leq p < \infty$, such that for every $f \in
L^p(\Omega,\mu)$ there exists $\lim_{\varepsilon \rightarrow 0^+}T_\varepsilon
f(x)$, $\mu$-a.e. $x\in \Omega$. It is an interesting
question to measure the speed of that convergence. In order to do
this it is usual to analyze expressions involving differences like
$|T_\varepsilon f-T_\eta f|$, $\varepsilon, \eta
>0$. The oscillation and variation operators defined as follows
have been used for this purpose. The oscillation operator associated
with $\{T_\varepsilon\}_{\varepsilon>0}$ is defined by
$$
O\left(\{T_\varepsilon\}\right)(f)(x)= \left(\sum^\infty_{i=0}
\sup_{t_{i+1} \leq \varepsilon_{i+1} <\varepsilon_i<t_i}
|T_{\varepsilon_{i+1}}f(x) -
T_{\varepsilon_i}f(x)|^2\right)^{\frac{1}{2}},
$$
for a fixed real sequence $\{t_i\}_{i \in \mathbb N}$ decreasing to zero.
For every $\rho >2$ the $\rho$-variation operator for
$\{T_\varepsilon\}_{\varepsilon>0}$ is given as follows
$$
V_\rho\left(\{T_\varepsilon\}\right)(f)(x)= \sup_{\{\varepsilon_i\}_{i \in \mathbb N}}\left(\sum^\infty_{i=0} |T_{\varepsilon_{i+1}}f(x) - T_{\varepsilon_i}f(x)|^\rho\right)^{\frac{1}{\rho}},
$$
where the supremum is taken over all real sequences 
$\{\varepsilon_i\}_{i \in \mathbb N}$ decreasing to zero. These
operators appear in an ergodic context. In \cite{CJRW}, \cite{CJRW1}
and \cite{RW} the $L^p$-boundedness properties for the oscillation
and variation operators were studied when
$T_\varepsilon,\;\varepsilon
>0$, represents the truncated Hilbert and higher dimensional Riesz
transform, and Euclidean Poisson semigroup (see also \cite{GilTo}
and the references therein). The corresponding results for the
truncated ultraspherical Riesz transform were established in
\cite[Theorem 8.3]{BMTU}. We also measure the speed of convergence in
(\ref{A1}) in terms of variation and oscillation operators for
the corresponding truncated operators. Next result is an extension
of \cite[Theorem 8.3]{BMTU} for the higher Riesz transform
$R_\lambda^k$.
\begin{teo}\label{oscilacion}
Let $\lambda >0$ and $k \in \mathbb N$. For every $\varepsilon >0$ we define by $R_{\lambda,\varepsilon}^k$ the $\varepsilon$-truncation of $R_\lambda^k$ as follows
$$
R_{\lambda,\varepsilon}^k(f)(\theta)=\int_{0,|\theta-\varphi|>\varepsilon}^\pi
R_{\lambda}^k(\theta,\varphi)f(\varphi)dm_\lambda (\varphi ).
$$
If $\{t_i\}_{i\in\mathbb N}$ is a real decreasing sequence that
converges to zero, the oscillation operator
$O(\{R^k_{\lambda,\varepsilon}\})$ is a bounded operator from
$L^p((0,\pi),dm_\lambda (\varphi ))$ into itself, for
every $1<p<\infty$, and from
$L^1((0,\pi),dm_\lambda (\varphi ))$ to
$L^{1,\infty}((0,\pi),dm_\lambda (\varphi ))$. Also,
for every $\rho >2$, the variation operator
$V_\rho(\{R^k_{\lambda,\varepsilon}\})$ is bounded from
$L^p((0,\pi),dm_\lambda (\varphi ))$ into itself, for
every $1<p<\infty$, and from
$L^1((0,\pi),dm_\lambda (\varphi ))$ to
$L^{1,\infty}((0,\pi),dm_\lambda (\varphi ))$.
\end{teo}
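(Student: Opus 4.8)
The proof will follow the pattern of \cite[Theorem~8.3]{BMTU}, which settles the case $k=1$, the new ingredient being the kernel information supplied by Lemma~\ref{lema}. First I would view $O(\{R_{\lambda,\varepsilon}^k\})f$ and $V_\rho(\{R_{\lambda,\varepsilon}^k\})f$ as the pointwise $\mathcal B$-norm of the function $\varepsilon\mapsto R_{\lambda,\varepsilon}^kf$, where $\mathcal B$ denotes respectively the oscillation space associated with $\{t_i\}$ (functions on $(0,\infty)$ modulo constants) and the $\rho$-variation space; the assertion then amounts to the boundedness of the $\mathcal B$-valued linear operator $f\mapsto(\varepsilon\mapsto R_{\lambda,\varepsilon}^kf)$ from $L^p((0,\pi),dm_\lambda)$ into $L^p_{\mathcal B}((0,\pi),dm_\lambda)$ for $1<p<\infty$ and from $L^1$ into $L^{1,\infty}_{\mathcal B}$. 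I would then split $R_{\lambda,\varepsilon}^k=R_{\lambda,\varepsilon}^{k,\mathrm{loc}}+R_{\lambda,\varepsilon}^{k,\mathrm{glob}}$ by restricting the $\varphi$-integration to the ``local'' region $N=\{(\theta,\varphi)\in(0,\pi)^2:|\theta-\varphi|<\tfrac12\min(\theta,\pi-\theta)\}$ or to its complement, and treat the two pieces separately.

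The global part is the easy one. By Lemma~\ref{lema} there is a Hardy-type kernel $H$ with $|R_\lambda^k(\theta,\varphi)|\le H(\theta,\varphi)$ off $N$. For $\varepsilon<\eta$ the increment $R_{\lambda,\varepsilon}^{k,\mathrm{glob}}f-R_{\lambda,\eta}^{k,\mathrm{glob}}f$ is an integral over the annular shell $\{\varepsilon<|\theta-\varphi|\le\eta\}$, and along any decreasing sequence these shells are pairwise disjoint; hence, using $(\sum|a_i|^\rho)^{1/\rho}\le\sum|a_i|$ for $\rho>1$ and $\sum|a_i|^2\le(\sum|a_i|)^2$, a telescoping argument dominates both $O(\{R_{\lambda,\varepsilon}^{k,\mathrm{glob}}\})f$ and $V_\rho(\{R_{\lambda,\varepsilon}^{k,\mathrm{glob}}\})f$ by the positive operator $f\mapsto\int_0^\pi H(\theta,\varphi)|f(\varphi)|\,dm_\lambda(\varphi)$. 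The latter is bounded on $L^p((0,\pi),dm_\lambda)$ for $1<p<\infty$ and from $L^1$ into $L^{1,\infty}$, by the $L^p(dm_\lambda)$-Hardy inequalities on the interval (this is precisely why the kernels in Lemma~\ref{lema} are named after Hardy).

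For the local part I would use Lemma~\ref{lema} to write, on $N$, $R_\lambda^k(\theta,\varphi)=K_k(\theta-\varphi)+E(\theta,\varphi)$, where $K_k$ is the kernel of the $k$-th one-dimensional Euclidean Riesz transform --- thus $K_k(x)=c_k\,\mathrm{p.v.}\,x^{-1}$ when $k$ is odd and $K_k$ vanishes off the origin when $k$ is even, the constant $\gamma_k$ of Theorem~\ref{thprin} being part of the operator and not of its truncations --- and $E$ is an error kernel which, after using that $(\sin\varphi)^{2\lambda}\asymp(\sin\theta)^{2\lambda}$ on $N$, satisfies $\sup_\theta\int_{(\theta,\varphi)\in N}|E(\theta,\varphi)|\,dm_\lambda(\varphi)<\infty$ together with the symmetric bound in $\varphi$. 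The contribution of $E$ is dominated, exactly as in the global part, by $f\mapsto\int_N|E(\theta,\varphi)||f(\varphi)|\,dm_\lambda(\varphi)$, which by Schur's test is bounded on every $L^p((0,\pi),dm_\lambda)$, $1\le p\le\infty$, hence of weak type $(1,1)$. When $k$ is even this finishes the local part. When $k$ is odd it remains to handle the truncated operator with kernel $c_k(\theta-\varphi)^{-1}\chi_N(\theta,\varphi)$; since removing the cut-off $\chi_N$ costs only a further Hardy-type error, I may work with the genuine truncated Euclidean Riesz transform $T_\varepsilon f(\theta)=c_k\int_{|\theta-\varphi|>\varepsilon}(\theta-\varphi)^{-1}f(\varphi)(\sin\varphi)^{2\lambda}\,d\varphi$. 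Decomposing $(0,\pi)$ into a finitely overlapping family of intervals $\{I_j\}$ on which $(\sin\theta)^{2\lambda}$ varies by a bounded factor $s_j$, chosen so that a fixed dilate $\widetilde I_j$ of $I_j$ still satisfies $(\sin\varphi)^{2\lambda}\asymp s_j$ and contains every $\varphi$ coupled to some $\theta\in I_j$ by the local operator, on $I_j$ one may replace $dm_\lambda$ by $s_j\,d\varphi$ and $f$ by $f\chi_{\widetilde I_j}$; by the homogeneity of $O$ and $V_\rho$ the problem is then reduced, piece by piece, to the $L^p(\mathbb R)$-boundedness ($1<p<\infty$) and weak type $(1,1)$ of the oscillation and variation operators for the truncated one-dimensional Euclidean Riesz transform --- that is, for a constant multiple of the truncated Hilbert transform --- which are the results of \cite{CJRW}, \cite{CJRW1} (see also \cite{RW}, \cite{GilTo}). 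Since $s_j=(\sin\theta_j)^{2\lambda}\le1$, the extra powers $s_j^{\,p}$ produced by the change of density are harmless on summing in $j$, and the finite overlap of $\{I_j\}$, together with the usual additivity of weak type bounds over a bounded number of families, gives the claimed estimates on $(0,\pi)$ with the measure $dm_\lambda$.

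The key obstacle --- and the reason the genuinely singular part is reduced to the Euclidean operator rather than treated abstractly --- is that the oscillation and variation operators are not $\mathcal B$-valued Calder\'on--Zygmund operators on the space of homogeneous type $((0,\pi),dm_\lambda,|\cdot-\cdot|)$: the jump of the truncation $\chi_{\{|\theta-\varphi|>\varepsilon\}}$ breaks the H\"ormander smoothness condition for the associated $\mathcal B$-valued kernel, so the vector-valued theory cannot be invoked directly and one must fall back on the one-dimensional results of \cite{CJRW}, where this point was resolved. Given Lemma~\ref{lema}, the remaining effort --- which I have only outlined --- is to read off from the estimates there that the error kernel $E$ lies in the Schur class relative to $dm_\lambda$ (the behaviour near the endpoints $0$ and $\pi$ being the delicate case) and to check that the localization and the finite-overlap reduction preserve the cancellation underlying the Euclidean estimates.
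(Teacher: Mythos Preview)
Your overall architecture is exactly that of the paper: split into local and global pieces, dominate the global part and the local error by positive Hardy/Schur kernels, and reduce the genuinely singular local part to the truncated Hilbert transform on~$\mathbb R$, for which \cite{CJRW} applies. This is precisely the strategy of \cite[Proposition~8.1]{BMTU}, which the paper invokes.

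There is, however, a real gap at the point where you write ``use Lemma~\ref{lema} to write, on $N$, $R_\lambda^k(\theta,\varphi)=K_k(\theta-\varphi)+E(\theta,\varphi)$, where $K_k$ is the kernel of the $k$-th one-dimensional Euclidean Riesz transform''. Lemma~\ref{lema} does \emph{not} give you this: its main term on $A_2$ is $R^k(\theta,\varphi)/(\sin\theta\sin\varphi)^\lambda$, where $R^k$ is the \emph{circle} $k$-th Riesz kernel
\[
R^k(\theta,\varphi)=\frac{1}{2\pi\Gamma(k)}\frac{\partial^k}{\partial\theta^k}\int_0^1\Big(\log\tfrac1r\Big)^{k-1}\Big(\frac{1-r^2}{1-2r\cos(\theta-\varphi)+r^2}-1\Big)\frac{dr}{r},
\]
and nothing in Lemma~\ref{lema} tells you that this kernel behaves like $c_k/(\theta-\varphi)$ modulo an integrable remainder, nor that it is integrable when $k$ is even. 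The paper supplies exactly this missing step as a separate lemma (Lemma~\ref{lemosc}), proving that
\[
R^k(\theta,\varphi)=\frac{M_k}{\sin(\theta-\varphi)}+O\big(|\theta-\varphi|^{-1/2}\big),\qquad (\theta,\varphi)\in A_2,\ \theta\neq\varphi,
\]
and, crucially, that $M_k=0$ when $k$ is even. The last fact is not the abstract statement that $\partial^k(-\partial^2)^{-k/2}=(-1)^{k/2}I$; the kernel $R^k$ is not identically zero off the diagonal for even $k$, only locally integrable, and the paper obtains $M_k=0$ by contradiction from Theorem~\ref{thprin} (if $M_k\neq0$ the principal value limit would fail, since $1/|\sin(\theta-\varphi)|$ carries no cancellation). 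So your assertion ``$K_k$ vanishes off the origin when $k$ is even'' hides a nontrivial argument, and for odd $k$ the identification of the constant in front of $1/(\theta-\varphi)$ likewise requires the computation in Lemma~\ref{lemosc}. Once you insert that lemma, your outline coincides with the paper's proof.
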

We remark that the representation of the $k$-th Riesz transform
$R_\lambda^k$ as a principal value integral operator will allow us
to investigate weighted norm inequalities for $R_\lambda^k$
involving a class of weights wider than the Muckenhoupt class
considered in \cite{BMT}. This question will be studied in a
forthcoming paper.

Troughout this paper by $C$ we always denote a positive constant
that can change from one line to the other one and $i,j$ represent
nonnegative integers.

\section{Proof of Theorem \ref{thprin}}

In \cite[Theorem 1.5]{BMT} it was established that, for every $k\in
\mathbb{N}$, the k-th Riesz transform $R_\lambda^k$ is a
Calder\'on-Zygmund operator in the homogeneous type space
$((0,\pi),|.|,dm_\lambda(\theta))$.
Then, according to \cite[Theorem 9.4.5]{Gra} the maximal operator
$R_{\lambda,*}^k$ given by
$$
R_{\lambda,*}^k(f)=\sup_{\varepsilon>0}|R_{\lambda,\varepsilon}^k(f)|,
$$
where $R_{\lambda,\varepsilon}^k$ is defined as in Theorem
\ref{oscilacion}, is bounded from
$L^p((0,\pi),w(\theta)dm_\lambda (\theta ))$ into itself,
when $1<p<\infty$ and $w \in A_\lambda^p$, and from
$L^1((0,\pi),w(\theta)dm_\lambda (\theta ))$ into
$L^{1,\infty}((0,\pi),w(\theta)dm_\lambda (\theta ))$,
when $w \in A_{\lambda}^1.$ Suppose we have proved that, for every
$f\in C_c^\infty(0,\pi)$, there exists the limit
\begin{equation}\label{limite}
T_\lambda^k(f)(\theta)=\lim_{\varepsilon \rightarrow
0^+}\int^\pi_{0,\,|\theta-\varphi|>\varepsilon}
R_\lambda^k(\theta,\varphi)f(\varphi)dm_\lambda (\varphi ),\quad \mbox{a.e. }\theta\in
(0,\pi),
\end{equation}
and that $T_\lambda^kf=R_\lambda^kf-\gamma_kf$. Then,
$L^p$-boundedness properties of the maximal operator
$R_{\lambda,*}^k$ imply that the limit in (\ref{limite}) exists for
almost all $\theta\in (0,\pi)$, for every $f\in
L^p((0,\pi),w(\theta)dm_\lambda (\theta ))$, $1\le
p<\infty$, and $w\in A_\lambda^p$. Moreover, by defining
$T_\lambda^k$ in the obvious way on
$L^p\!(\!(0,\pi),\!w(\theta)dm_\lambda (\theta)\!)$, $1\le
p<\infty$, $T_\lambda^k$ is a bounded operator from
$L^p((0,\pi),w(\theta)dm_\lambda (\theta ))$ into itself,
when $1<p<\infty$ and $w \in A_\lambda^p$, and from
$L^1((0,\pi),w(\theta)dm_\lambda (\theta ))$ into
$L^{1,\infty}((0,\pi),w(\theta)dm_\lambda (\theta ))$,
when $w \in A_{\lambda}^1.$ Hence, by \cite[Theorem 1.4]{BMT}, we
conclude that, for each $f\in
L^p((0,\pi),w(\theta)dm_\lambda (\theta ))$, $1\le
p<\infty$,
$$
R_\lambda^k(f)(\theta)=\lim_{\varepsilon \rightarrow
0^+}\int^\pi_{0,\,|\theta-\varphi|>\varepsilon}
R_\lambda^k(\theta,\varphi)f(\varphi)dm_\lambda (\varphi )+\gamma_kf(\theta),\quad \mbox{ a.e. }\theta\in
(0,\pi),
$$
and the proof of this theorem would be finished.

Let now $f\in C^\infty_c(0,\pi)$ and $k \in \mathbb N$ . We can
write
$$
L_\lambda^{-\frac{k}{2}}f(\theta)=\sum_{n=0}^\infty
(n+\lambda)^{-k}a_n^\lambda(f)\frac{P_n^\lambda(\cos\theta)}{\|P_n^\lambda(\cos\cdot)\|}_{L^2((0,\pi),dm_\lambda (\theta ))},\quad \theta\in
(0,\pi).
$$
Then, since $f\in C_c^\infty(0,\pi)$, $L_\lambda^{-\frac{k}{2}}f\in
C^\infty(0,\pi)$ (see \cite[(2.4) and (2.6)]{Mu}). We will see that
$$
\frac{d^k}{d\theta^k}  L_\lambda^{-\frac{k}{2}}f(\theta) =
\lim_{\varepsilon \rightarrow 0^+}\int_{0,|\theta - \varphi|>
\varepsilon}^\pi
f(\varphi)R_\lambda^k(\theta,\varphi)\;dm_\lambda (\varphi )+\gamma_kf(\theta),\quad \mbox{a.e. }
\theta \in (0,\pi),
$$
where
$$
R_\lambda^k(\theta,\varphi)=\frac{\partial^k}{\partial
\theta^k}\left(\frac{1}{\Gamma(k)} \int_0^1r^{\lambda
-1}\left(\log\frac{1}{r}\right)^{k-1}P_\lambda(r,\theta,\varphi)\;dr\right),\quad
\theta,\varphi\in(0,\pi),
$$
$$
P_\lambda(r,\theta,\varphi)= \frac{\lambda }{\pi}
\int_0^\pi\frac{(1-r^2)(\sin
t)^{2\lambda-1}}{(1-2r(\cos\theta\cos\varphi+\sin\theta\sin\varphi\cos
t)+r^2)^{\lambda+1}}dt,\quad r \in (0,1), \theta,\varphi\in(0,\pi),
$$
and $\gamma_k=0$, when $k$ is odd, and $\gamma_k= (-1)^{\frac{k}{2}}$,
when $k$ is even.

As in \cite{BMT} we introduce the following useful notation:
$$
\begin{array}{l}
\sigma=\sin\theta\sin\varphi,\;\;a=\cos\theta\cos\varphi+\sigma\cos t=\cos(\theta-\varphi)-\sigma(1-\cos t), \\
b=\frac{\partial}{\partial \theta} a= -\sin\theta\cos\varphi+\cos\theta\sin\varphi\cos t=-\sin(\theta-\varphi)-\cos\theta\sin\varphi(1-\cos t),\\
\Delta_r=1-2r\cos(\theta-\varphi)+r^2=(1-r)^2+2r(1-\cos(\theta-\varphi)),\;\Delta=\Delta_1, \\
D_r=1-2ra+r^2=\Delta_r+2r\sigma(1-\cos t).
\end{array}
$$
We divide the proof in several steps.

\noindent {\bf Step 1.}  We prove in the following that for every
$\ell=0,\ldots,k-1$,
\begin{equation}\label{lderivada}
\frac{d^\ell}{d\theta^\ell} L_\lambda^{-\frac{k}{2}}f(\theta)= \int_0^\pi f(\varphi)R_\lambda^{k,\ell}(\theta,\varphi)\;dm_\lambda(\varphi),\quad \theta\in(0,\pi),
\end{equation}
being
$$
R^{k,\ell}_\lambda(\theta,\varphi)= \frac{1}{\Gamma(k)}
\int_0^1r^{\lambda
-1}\left(\log\frac{1}{r}\right)^{k-1}\frac{\partial^\ell}{\partial
\theta^\ell} P_\lambda(r,\theta,\varphi)\;dr,\quad \theta,\varphi
\in (0,\pi).
$$

Let $\ell \in \mathbb N$, $0\leq \ell\leq k-1$. Our objective is to establish that
\begin{equation}\label{acotR}
\left|R_\lambda^{k,\ell}(\theta,\varphi)\right| \leq C\left\{\begin{array}{ll}
                                                                    (\sin\varphi)^{-2\lambda-1}, & \hspace{1cm} (\theta,\varphi) \in A_1;\vspace{3mm}\\
                                                                    \displaystyle\frac{1}{(\sin\theta\sin\varphi)^\lambda \sqrt{|\theta-\varphi|}}, & \hspace{1cm} (\theta,\varphi)\in A_2,\,\,\theta\neq\varphi; \vspace{3mm}\\
                                                                    (\sin\theta)^{-2\lambda-1}, & \hspace{1cm} (\theta,\varphi) \in
                                                                    A_3;
                                                                  \end{array} \right.
\end{equation}
where $A_i, i=1,2,3$, are the sets in the next figure:
\begin{center}
\begin{picture}(200,200)
\put(0,10){\line(1,0){200}}
\put(10,0){\line(0,1){190}}
\put(50,10){\circle{2}}
\put(10,50){\circle{2}}
\put(10,90){\circle{2}}
\put(10,130){\circle{2}}
\put(10,170){\circle{2}}
\put(90,10){\circle{2}}
\put(130,10){\circle{2}}
\put(170,10){\circle{2}}
\put(0,50){$\frac{\pi}{4}$}
\put(0,90){$\frac{\pi}{2}$}
\put(0,130){$\frac{3\pi}{4}$}
\put(0,170){$\pi$}
\put(50,0){$\frac{\pi}{4}$}
\put(90,0){$\frac{\pi}{2}$}
\put(130,0){$\frac{3\pi}{4}$}
\put(170,0){$\pi$}
\drawline[0](10,10)(90,50)(170,170)
\drawline[0](10,10)(90,130)(170,170)
\dashline[0]{2}(90,10)(90,50)
\dashline[0]{2}(90,130)(90,170)
\dashline[0]{2}(10,170)(190,170)
\dashline[0]{2}(170,10)(170,190)
\put(50,130){{\bf $A_1$}}
\put(130,50){{\bf $A_1$}}
\put(90,90){{\bf $A_2$}}
\put(50,20){{\bf $A_3$}}
\put(120,155){{\bf $A_3$}}
\put(0,190){$\varphi$}
\put(190,0){$\theta$}
\put(75, -20){Figure 1}
\end{picture}
\end{center}
\vspace*{0.5cm}

According to \cite[Lemma 3.5]{BMT} we have that
\begin{equation}\label{derivDr}
\frac{\partial^\ell}{\partial \theta^\ell}
\left(\frac{1}{D_r^{\lambda+1}}\right)=
\sum_{s,i,j}c_{\ell,s,i,j}\frac{r^{i+j}a^ib^j}{D_r^{\lambda+1+s}},
\end{equation}
where $c_{\ell,s,i,j}\neq 0$ only if
\begin{equation}\label{indices}
s=1,\ldots,\ell,\;\;j \geq 2s-\ell, \mbox{ and }i+j=s.
\end{equation}
Moreover, by using the di Fa\`a di Bruno's formula (\cite[Theorem 2]{Ro}), we can see that, for every $\ell \in \mathbb{N}$, $s=1,...,\ell$, and  $i+j=s$,
\begin{equation}\label{FaadiBruno}
c_{\ell ,s,i,j}=2^s\ell !s!\sum\frac{(-1)^{s+j+\alpha (k_1,...,k_\ell)}}{k_1!\cdots k_\ell !1!^{k_1}2!^{k_2}\cdots \ell !^{k_\ell}},
\end{equation}
where the sum is over all different solutions in nonnegative integers $k_1,...,k_\ell$ of the system
$$
\left.\begin{array}{l}
k_1+k_2+\cdots +k_\ell=s\\
k_1+2k_2+...+\ell k_\ell=\ell\\
\sum_{r\;{ \rm par}}k_r=i\\
\sum_{r\;{\rm impar}}k_r=j
\end{array}\right\}
$$
and
$$
\alpha (k_1,...,k_\ell)=\sum_{r=2}^{[\frac{\ell }{2}]}(r-1)(k_{2r-1}+k_{2r})+m_\ell,
$$being $m_\ell =0$, if $\ell $ is even, and $m_\ell =\frac{(\ell -1)k_\ell}{2}$, when $\ell $ is odd.

We define, for every $s,i,j$ satisfying (\ref{indices}),
$$
M_{\ell,s,i,j}(\theta,\varphi)=\int_0^1\int_0^\pi r^{i+j+\lambda-1}\left(\log\frac{1}{r}\right)^{k-1}(1-r^2)\frac{a^ib^j(\sin t)^{2\lambda-1}}{D_r^{\lambda+1+s}}dtdr,\quad \theta, \varphi \in (0,\pi ).
$$

In order to obtain (\ref{acotR}) it is then sufficient to see that
\begin{equation}\label{acotM}
\left|M_{\ell,s,i,j}(\theta,\varphi)\right| \leq C\left\{\begin{array}{ll}
                                                                    (\sin\varphi)^{-2\lambda-1}, & \hspace{1cm}(\theta,\varphi) \in A_1;\vspace{3mm}\\
                                                                    \displaystyle\frac{1}{(\sin\theta\sin\varphi)^\lambda\sqrt{|\theta-\varphi|}}, &\hspace{1cm} (\theta,\varphi) \in A_2,\,\,\theta\neq\varphi;\vspace{3mm}\\
                                                                    (\sin\theta)^{-2\lambda-1}, & \hspace{1cm}\displaystyle (\theta,\varphi) \in
                                                                    A_3;
                                                                  \end{array} \right.
\end{equation}
for each $s,i,j$ satisfying (\ref{indices}).

Moreover, by the symmetry of the Figure 1 and since
$$
M_{\ell,s,i,j}(\pi -\theta ,\pi -\varphi )=(-1)^jM_{\ell , s,i,j}(\theta , \varphi ),\quad \theta ,\varphi \in (0,\pi ),
$$
when $s,i,j$ are as in (\ref{indices}), we can assume that $(\theta, \varphi )\in (0,\frac{\pi }{2})\times (0,\pi )$.

Let us fix $s,i,j$ verifying (\ref{indices}), $\theta \in (0,\frac{\pi }{2})$ and $\varphi \in (0,\pi )$.
By proceeding as in \cite[Lemma 3.6]{BMT}, and using that $\log\frac{1}{r} \sim 1-r$, as $r\rightarrow 1^-$, and $D_r \geq C$, $r\in (0,\frac{1}{2})$,  we get
\begin{eqnarray*}
|M_{\ell,s,i,j}(\theta,\varphi)| & \leq & \left(\int_0^{\frac{1}{2}}\int_0^\pi + \int_{\frac{1}{2}}^1\int_0^\pi\right)r^{i+j+\lambda-1}\left(\log\frac{1}{r}\right)^{k-1}(1-r^2)\frac{|a|^i|b|^j(\sin t)^{2\lambda-1}}{D_r^{\lambda+1+s}}\;dtdr \\
&\leq& C\left(\int_0^{\frac{1}{2}}r^{\lambda -1}\left(\log\frac{1}{r}\right)^{k-1}dr+\int_{\frac{1}{2}}^1\int_0^\pi (1-r)^k \frac{|b|^j(\sin t)^{2\lambda-1}}{D_r^{\lambda+1+s}}\;dtdr\right)\\
&\leq &C\left(1+\int_{\frac{1}{2}}^1\int_0^\pi (1-r)^k
\frac{|b|^j(\sin t)^{2\lambda-1}}{D_r^{\lambda+1+s}}\;dtdr\right).
\end{eqnarray*}

It can be seen that, if $(\alpha ,\beta )\in [0,\pi ]\times
[0,\pi]$, $z\in (0,1)$ and $\alpha \leq z\beta$, then there exists
$C>0$ such that $\sin (\beta -\alpha )\geq \min \{\sin \beta, \sin
((1-z)\beta )\}\geq C\sin \beta $, and that, if  $(\alpha ,\beta
)\in [0,\pi /2]\times [0,\pi]$ and $\frac{\alpha}{2}\leq \beta\leq
\frac{3\alpha }{2}$, then $\sin |\beta -\alpha |\leq \sin \alpha $
and $\sin \alpha\sim \sin \beta $. These considerations allow us to
write
\begin{equation}\label{bj}
|b|^j\leq C(|\sin (\theta -\varphi )|^j+(\sin \varphi )^j)\leq
C\left\{\begin{array}{ll}|\sin (\theta -\varphi )|^j,&\varphi \leq
        \frac{\theta }{2}\mbox{ or }\varphi \geq \frac{3\theta}{2},\\[0.2cm]
        (\sin \varphi )^j,&\frac{\theta}{2} \le \varphi \le \frac{3\theta}{2}.
        \end{array}
\right.
\end{equation}

Then, since $1-\cos \alpha \geq (\sin \alpha )^2/\pi$, $\alpha \in
[0,\pi]$, we obtain, when $\varphi \leq \frac{\theta }{2}$ or
$\varphi \geq \frac{3\theta}{2}$,
\begin{eqnarray}\label{acotMA1A3}
|M_{\ell ,s,i,j}(\theta ,\varphi )|&\leq &C\left(1+(\sin |\theta -\varphi |)^j\int_{\frac{1}{2}}^1\frac{(1-r)^k}{\Delta _r^{\lambda +s+1}}dr\right)\nonumber\\
&\leq &C\left(1+(\sin |\theta -\varphi |)^j\int_{\frac{1}{2}}^1\frac{(1-r)^{2s-j}}{(\Delta +(1-r)^2)^{\lambda +s+1}}dr\right)\nonumber\\
&\leq&C\left(1+\frac{(\sin |\theta -\varphi |)^j}{\Delta ^{\lambda +\frac{j+1}{2}}}\int_0^{\frac{1}{2\sqrt{\Delta }}}\frac{u^{2s-j}}{(1+u^2)^{\lambda +s+1}}du\right)\nonumber\\
&\leq &C\frac{1}{(\sin |\theta -\varphi |)^{2\lambda +1}}\\
&\leq& C\left\{\begin{array}{ll}
                                                    (\sin \varphi )^{-2\lambda -1},&\,\,\varphi \geq \frac{3\theta}{2},\\[0.2cm]
                                                    (\sin \theta )^{-2\lambda -1},& \,\,\varphi \leq \frac{\theta }{2}.\nonumber
                                                    \end{array}
\right.
\end{eqnarray}

Suppose now that $\frac{\theta}{2} \le \varphi \le
\frac{3\theta}{2}$, $\theta\neq\varphi$. One can write
\begin{multline*}
\int_{\frac{1}{2}}^1\int_0^\pi(1-r)^k\frac{|b|^j(\sin t)^{2\lambda-1}}{D_r^{\lambda +1+s}}dtdr\\
\shoveleft{\hspace{40mm}\leq C\left(\int_{\frac{1}{2}}^1\int_0^\pi(1-r)^k\frac{(\sin|\theta-\varphi|)^j(\sin t)^{2\lambda-1}}{D_r^{\lambda+1+s}}dtdr \right.}\\
\shoveleft{\hspace{44mm}+\left.\int_{\frac{1}{2}}^1\int_0^{\frac{\pi}{2}}(1-r)^k\frac{(\sin\varphi(1-\cos t))^j(\sin t)^{2\lambda-1}}{D_r^{\lambda+1+s}}dtdr \right.}\\
\left.+\int_{\frac{1}{2}}^1\int_{\frac{\pi}{2}}^\pi(1-r)^k\frac{(\sin \varphi)^j(\sin t)^{2\lambda-1}}{D_r^{\lambda+1+s}} dtdr \right) = \sum^3_{\beta =1}I_{\beta}(\theta,\varphi).\hspace{10mm}
\end{multline*}
We analyze the first integral. By making two changes of variables,
as in \cite[p. 1235]{BMT}, and by taking into account that $2s-j
\leq \ell \leq k-1$ and that $\Delta =2(1-\cos(\theta-\varphi))
\sim (\sin|\theta-\varphi|)^2\sim (\theta -\varphi )^2$ we get
\begin{eqnarray*}
I_1(\theta,\varphi) & \leq & C (\sin|\theta-\varphi|)^j\int_{\frac{1}{2}}^1 (1-r)^k\int_0^\pi
\frac{(\sin t)^{2\lambda-1}}{(\Delta_r+2r\sigma(1-\cos t))^{\lambda +1+s}}dtdr\\
& \leq & C (\sin|\theta-\varphi|)^j\int_{\frac{1}{2}}^1 (1-r)^k\int_0^{\frac{\pi}{2}}
\frac{(\sin t)^{2\lambda-1}}{(\Delta_r+2r\sigma(1-\cos t))^{\lambda +1+s}}dtdr\\
& \leq &C (\sin|\theta-\varphi|)^j \int_{\frac{1}{2}}^1 (1-r)^k\int_0^{\frac{\pi}{2}}\frac{t^{2\lambda-1}}{(\Delta_r+\sigma t^2)^{\lambda +1+s}}dtdr \\
&\leq& C(\sin|\theta-\varphi|)^j\int_{\frac{1}{2}}^1 \frac{(1-r)^k}{\Delta_r^{\lambda +s+1}}
\Big(\sqrt{\frac{\Delta_r}{\sigma}}\Big)^{2\lambda}\int_0^{\frac{\pi}{2}
\sqrt{\frac{\sigma}{\Delta_r}}}\frac{u^{2\lambda -1}}{(1+u^2)^{\lambda +s+1}}dudr \\
&\leq& C \frac{(\sin|\theta-\varphi|)^j}{\sigma^\lambda}\int_{\frac{1}{2}}^1\frac{(1-r)^k}{\Delta_r^{1+s}}dr \\
&\leq& C \frac{(\sin|\theta-\varphi|)^j}{\sigma^\lambda}\int_{\frac{1}{2}}^1\frac{(1-r)^{2s-j+1/2}}{(\Delta +(1-r)^2)^{1+s}}dr\\
& \leq
&C\frac{(\sin|\theta-\varphi|)^j(\sqrt{\Delta})^{2s-j+3/2}}{\sigma^\lambda\Delta^{1+s}}
\int_0^{\frac{1}{2\sqrt{\Delta}}} \frac{u^{2s-j+1/2}}{(1+u^2)^{1+s}}du \\
&\leq&C\frac{(\sin|\theta-\varphi|)^j}{\sigma^\lambda\Delta^{\frac{1}{4}+\frac{j}{2}}}\int_0^\infty
\frac{u^{2s-j+1/2}}{(1+u^2)^{1+s}}du \\
&\leq& C\frac{1}{\sigma ^\lambda\sqrt{|\theta-\varphi|}}.
\end{eqnarray*}

For the second integral we write
\begin{eqnarray*}
I_2(\theta,\varphi) &\leq & C(\sin\varphi)^j\int_{\frac{1}{2}}^1(1-r)^k\int_0^{\frac{\pi}{2}}\frac{t^{2\lambda+2j-1}}{(\Delta_r+\sigma t^2)^{\lambda+1+s}}dtdr\\
&\le&C(\sin\varphi)^j\int_{\frac{1}{2}}^1(1-r)^k\int_0^{\frac{\pi}{2}}\frac{t^{2\lambda+j-1}}{(\Delta_r+\sigma t^2)^{\lambda+1+s}}dtdr\\
&\leq& C\frac{(\sin\varphi)^j}{\sigma^{\lambda +\frac{j}{2}}}\int_{\frac{1}{2}}^1\frac{(1-r)^k}{\Delta_r^{1+s-\frac{j}{2}}}dr\\
&\leq& C\frac{(\sin \varphi )^j}{\sigma ^{\lambda +\frac{j}{2}}}
\int_{\frac{1}{2}}^1\frac{(1-r)^{2s-j+\frac{1}{2}}}{(\Delta +(1-r)^2)^{1+s-\frac{j}{2}}}dr \\
& \leq & C\frac{(\sin\varphi)^j(\sqrt{\Delta})^{2s-j+\frac{3}{2}}}{\sigma^{\lambda+\frac{j}{2}}\Delta^{1+s-\frac{j}{2}}}\int_0^{\frac{1}{2\sqrt{\Delta}}}\frac{u^{2s-j+\frac{1}{2}}}
{(1+u^2)^{1+s-\frac{j}{2}}}du\\
&\leq& C \frac{(\sin\varphi)^j}{\sigma^{\lambda+\frac{j}{2}}\Delta^{\frac{1}{4}}} \leq C \frac{1}{\sigma ^\lambda \sqrt{|\theta-\varphi|}},
\end{eqnarray*}
because $\sin\theta \sim \sin\varphi$.
Finally it has
\begin{eqnarray*}
I_3(\theta,\varphi) &\leq & C(\sin\varphi)^j\int_{\frac{1}{2}}^1\int_{\frac{\pi}{2}}^\pi (1-r)^k\frac{(\sin t)^{2\lambda-1}}{(\Delta_r+\sigma)^{\lambda+1+s}}dtdr\\
&\leq& C\frac{(\sin\varphi)^j}{\sigma^{\lambda+\frac{j}{2}}}\int_{\frac{1}{2}}^1 \frac{(1-r)^k}{\Delta_r^{1+s-\frac{j}{2}}}dr\leq C\frac{1}{\sigma ^\lambda\sqrt{|\theta-\varphi|}}.
\end{eqnarray*}
Then we conclude that if $\frac{\theta}{2}\leq \varphi \leq
\frac{3\theta }{2}$, $\theta\neq\varphi$,
$$
|M_{\ell ,s,i,j}(\theta , \varphi)|\leq
C\left(1+\int_{\frac{1}{2}}^1\int_0^\pi(1-r)^k\frac{|b|^j(\sin
t)^{2\lambda-1}}{D_r^{\lambda +1+s}}dtdr\right)\leq C\frac{1}{\sigma
^\lambda\sqrt{|\theta-\varphi|}},
$$
that, jointly with (\ref{acotMA1A3}), gives (\ref{acotM}).

We have proved that the integral in (\ref{lderivada}) is absolutely
convergent. By analyzing carefully the above estimates we can also
see that, for every $\ell=0,1,...,k-2$, $R_\lambda^{k,\ell}$ is a
continuous function on $(0,\pi)\times(0,\pi)$. Then, we conclude
(\ref{lderivada}), for every $\ell=0,1,...,k-1$. Note that when
$\ell=k-1$ to prove (\ref{lderivada}) we need to use distributional
arguments (see Lemma \ref{derint1} in Appendix).

{\bf Step 2.} We now study the kernel
$$
R_\lambda^k(\theta, \varphi)=\frac{\lambda}{\pi\Gamma(k)}\frac{\partial^k}{\partial\theta^k}\left[\int_0^1r^{\lambda-1}\Big(\log\frac{1}{r}\Big)^{k-1}(1-r^2)
\int_0^\pi\frac{(\sin t)^{2\lambda-1}}{D_r^{\lambda+1}}dtdr\right],
$$
for $\theta,\varphi \in (0,\pi)$. We get estimates which are better than the ones obtained in (\ref{acotR}).
\begin{lema}\label{lema}
Let $\lambda >0$ and $k \in \mathbb N$. Then,
$$
R_\lambda^k(\theta,\varphi)= \left\{\begin{array}{ll}
                                    O((\sin\varphi )^{-(2\lambda+1)}), & (\theta ,\varphi )\in A_1;\\
                                    \displaystyle \frac{R^k(\theta ,\varphi )}{(\sin\theta\sin\varphi)^\lambda}+O\left(\frac{1}{(\sin \varphi )^{2\lambda +1}}\left(1+\sqrt{\frac{\sin \varphi }{|\theta -\varphi |}}\right)\right),&(\theta ,\varphi )\in A_2, \,\,\theta \neq\varphi;\\
                                    O((\sin\theta )^{-(2\lambda+1)}),&  (\theta ,\varphi )\in
                                    A_3;
                                    \end{array}\right.
$$
where
$$
R^k(\theta,\varphi) = \frac{1}{2\pi \Gamma
(k)}\frac{\partial^k}{\partial\theta^k}\int_0^1\left(\log\frac{1}{r}\right)^{k-1}\left(\frac{1-r^2}{1-2r\cos(\theta-\varphi)+r^2}-1\right)\frac{dr}{r}.
$$
\end{lema}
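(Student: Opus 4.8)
We outline the strategy we would follow to prove Lemma~\ref{lema}.

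Since $1-r^2$ does not depend on $\theta$, the plan is first to carry $\partial_\theta^k$ inside the $r$- and $t$-integrals — licit by the absolute convergence already used in Step~1 for $\ell\le k-1$, and by the distributional argument of Lemma~\ref{derint1} for the top order — and to apply \eqref{derivDr}--\eqref{FaadiBruno} with $\ell=k$, so that
$$R_\lambda^k(\theta,\varphi)=\frac{\lambda}{\pi\Gamma(k)}\sum_{s,i,j}c_{k,s,i,j}\,M_{k,s,i,j}(\theta,\varphi),$$
the sum running over $s=1,\dots,k$, $i+j=s$, $j\ge 2s-k$, with $M_{k,s,i,j}$ the very kernels estimated in Step~1 (now with $\ell=k$). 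On $A_1$ and $A_3$ there is nothing to add: the index constraint forces $2s-j=s+i\le k$, which is exactly what makes the computation proving \eqref{acotM} run verbatim for $\ell=k$, giving the claimed bounds $O((\sin\varphi)^{-(2\lambda+1)})$ on $A_1$ and $O((\sin\theta)^{-(2\lambda+1)})$ on $A_3$. Using $M_{k,s,i,j}(\pi-\theta,\pi-\varphi)=(-1)^jM_{k,s,i,j}(\theta,\varphi)$ together with the fact that $j\equiv k\pmod 2$ whenever $c_{k,s,i,j}\ne0$, the analysis on $A_2$ can be restricted to $\theta\in(0,\tfrac\pi2)$, where $\sin\theta\sim\sin\varphi$, $|\theta-\varphi|\le C\sin\varphi$ and $\sigma\sim(\sin\varphi)^2$.

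On $A_2$ we would extract the Euclidean main term from each inner $t$-integral. Writing $a=\widehat a-\sigma(1-\cos t)$, $b=\widehat b-\cos\theta\sin\varphi\,(1-\cos t)$ with $\widehat a=\cos(\theta-\varphi)$, $\widehat b=-\sin(\theta-\varphi)$, expanding $a^ib^j$ in powers of $1-\cos t$, and carrying out in $\int_0^\pi\frac{(1-\cos t)^p(\sin t)^{2\lambda-1}}{D_r^{\lambda+1+s}}\,dt$ the substitutions $\cos t=1-2v$ and then $w=4r\sigma v/\Delta_r$, this integral becomes
$$\frac{2^{p+2\lambda-1}}{(4r\sigma)^{p+\lambda}\,\Delta_r^{1+s-p}}\int_0^{4r\sigma/\Delta_r}\frac{w^{p+\lambda-1}\,\big(1-\tfrac{w\Delta_r}{4r\sigma}\big)^{\lambda-1}}{(1+w)^{\lambda+1+s}}\,dw .$$
Replacing, in the term $p=0$, the truncated $w$-integral by $\int_0^\infty\frac{w^{\lambda-1}}{(1+w)^{\lambda+1+s}}\,dw=B(\lambda,s+1)$ produces the main term, while all the $p\ge1$ contributions and the corrections coming from the truncation and from $\big(1-\tfrac{w\Delta_r}{4r\sigma}\big)^{\lambda-1}-1$ are sent to the error. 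Summing the $p=0$ main pieces over $(s,i,j)$, and using that the Fa\`a di Bruno coefficient attached to $\partial_\theta^k(1/\Delta_r)$ at level $s$ equals $c_{k,s,i,j}\,\Gamma(\lambda+1)\Gamma(s+1)/\Gamma(\lambda+s+1)$ — which is precisely the constant cancelling the prefactor $\lambda/\pi$ against $\tfrac{2^{2\lambda-1}}{4^\lambda}B(\lambda,s+1)$ — the sum collapses, via the analogue of \eqref{derivDr} for $1/\Delta_r$ and the identity $(1-r^2)\partial_\theta^k(1/\Delta_r)=\partial_\theta^k\big(\tfrac{1-r^2}{\Delta_r}-1\big)$, into $\frac{1}{2\pi\Gamma(k)\,\sigma^\lambda}\,\partial_\theta^k\!\int_0^1\big(\log\tfrac1r\big)^{k-1}\big(\tfrac{1-r^2}{\Delta_r}-1\big)\tfrac{dr}{r}=\frac{R^k(\theta,\varphi)}{\sigma^\lambda}$; the interchange of $\partial_\theta^k$ with $\int_0^1dr$ here is harmless because $\partial_\theta^k(1/\Delta_r)=O(r)$ at $r=0$.

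It remains to control the error, which consists of (i) the $p\ge1$ terms of the $a^ib^j$-expansion, (ii) the tail $\int_{4r\sigma/\Delta_r}^{\infty}$ together with the contribution of $\big(1-\tfrac{w\Delta_r}{4r\sigma}\big)^{\lambda-1}-1$, and (iii) the range $t\in(\tfrac\pi2,\pi)$, where $D_r\gtrsim\Delta_r+\sigma$. For each piece one performs $r$- and $w$- (resp. $t$-) integrations of exactly the type carried out for $I_1,I_2,I_3$ in Step~1 and in \cite[Section 3]{BMT} — splitting exponents of $\Delta_r$ so as to retain factors $\Delta_r^{-(1+s-j/2)}$, substituting $u=(1-r)/\sqrt\Delta$, and so on — and checks that the outcome is $O\big((\sin\varphi)^{-(2\lambda+1)}\big(1+\sqrt{\sin\varphi/|\theta-\varphi|}\big)\big)$, in particular that no singularity worse than $|\theta-\varphi|^{-1/2}$ survives. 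We expect this last bookkeeping to be the main obstacle: the $p\ge1$ terms carry powers of $\cot\theta=O((\sin\theta)^{-1})$ arising from $\cos\theta\sin\varphi=\sigma\cot\theta$, and these must be absorbed by the smallness of the truncated integrals (the factor $\min\{1,(r\sigma/\Delta_r)^{\mathrm{power}}\}$ coming from the finite upper limit $4r\sigma/\Delta_r$), not merely estimated away — this being the only genuinely new point compared with Step~1.
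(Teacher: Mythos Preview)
Your approach is sound and genuinely different from the paper's. The paper does not expand $a^ib^j$ in powers of $1-\cos t$ nor invoke the exact Beta identity; instead it splits the $(r,t)$-domain into three pieces --- $t\in(\tfrac{\pi}{2},\pi)$; $t\in(0,\tfrac{\pi}{2})$ with $r\in(0,1-\tfrac{\sqrt{\sigma}}{2})$; and $t\in(0,\tfrac{\pi}{2})$ with $r\in(1-\tfrac{\sqrt{\sigma}}{2},1)$ --- and extracts the main term only on the last piece, by first approximating $(\sin t)^{2\lambda-1}/D_r^{\lambda+1}$ by $t^{2\lambda-1}/(\Delta_r+r\sigma t^2)^{\lambda+1}$ via the mean value theorem and then evaluating the resulting $t$-integral over $(0,\infty)$ in closed form; the $\theta$-derivative is carried by Leibniz against the product $\sigma^{-\lambda}\cdot(1-r^2)/\Delta_r$, producing several cross terms that are estimated one by one. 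Your route --- differentiate first via \eqref{derivDr}, substitute exactly in $t$, and recognise $B(\lambda,s+1)$ --- is algebraically cleaner for isolating $R^k/\sigma^\lambda$: the relation between the Fa\`a di Bruno coefficients for $D_r^{-(\lambda+1)}$ and for $\Delta_r^{-1}$ (their ratio being $\Gamma(\lambda+1+s)/(\Gamma(\lambda+1)\,s!)$, exactly the reciprocal of $\lambda\,B(\lambda,s+1)$) collapses the sum in one line, with no Leibniz and no $\sin t\to t$ approximation. The price is that your error terms are less explicit, and in fact force the same $r$-split the paper makes: for $(1-r)^2\gtrsim\sigma$ the upper limit $4r\sigma/\Delta_r$ is $O(1)$, so the ``tail'' in your item~(ii) is essentially the full Beta integral and the correction there cancels the main term --- on that range you are back to bounding $M_{k,s,i,j}$ directly, which is precisely the paper's $I_\lambda^{k,2}$ and $K_\lambda^{k,1}$. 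Also, for $0<\lambda<1$ the factor $(1-w\Delta_r/(4r\sigma))^{\lambda-1}-1$ is singular at the upper endpoint, which is presumably what your item~(iii) is meant to isolate; the paper sidesteps this by working with the small-$t$ model $\Delta_r+r\sigma t^2$ from the start. None of this is a genuine obstacle --- your bookkeeping goes through --- but the paper's explicit $(r,t)$-partition trades the elegance of your Beta-function extraction for error terms whose estimation is entirely mechanical.
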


\begin{proof} Since $R_\lambda ^k(\theta, \varphi )=(-1)^kR_\lambda ^k(\pi -\theta ,\pi -\varphi)$ and $R^k(\theta ,\varphi )=(-1)^kR^k(\pi -\theta ,\pi -\varphi )$,
 $\theta, \varphi \in (0,\pi )$, we can assume $(\theta ,\varphi )\in (0,\frac{\pi }{2})\times (0,\pi )$.

When $( \theta ,\varphi )\in A_1\cup A_3$, we can argue as in the proof of (\ref{acotMA1A3}), for $\ell =k$, and thus we get
$$
|R_\lambda^k(\theta,\varphi)|=\Big|\frac{\lambda}{\pi\Gamma(k)} \sum_{\begin{subarray}{c}
                                                            s=1,\ldots,k\\j \geq 2s-k\\i+j=s
                                                            \end{subarray}} c_{k,s,i,j}M_{k,s,i,j}(\theta ,\varphi )\Big|\leq C\left\{\begin{array}{ll}
                                                    (\sin \varphi )^{-2\lambda -1},&\,\,\varphi \geq \frac{3\theta}{2},\\[0.2cm]
                                                    (\sin \theta )^{-2\lambda -1},&\,\,\varphi \leq \frac{\theta }{2}.
                                                    \end{array}
\right.
$$

We now consider $\frac{\theta}{2} \leq \varphi \leq
\frac{3\theta}{2}$ and $\varphi \neq \theta$. First we write
\begin{eqnarray}\label{R}
R_\lambda^k(\theta,\varphi)&
\hspace{-2cm}=&\hspace{-2cm}\frac{\lambda}{\pi\Gamma(k)}\left[\int_0^1\int_{\frac{\pi
}{2}}^\pi+\int_0^{1-\frac{\sqrt{\sigma }}{2}}\int_0^{\frac{\pi
}{2}}+\int_{1-\frac{\sqrt{\sigma }}{2}}^1\int_0^{\frac{\pi
}{2}}\right]r^{\lambda
-1}\left(\log\frac{1}{r}\right)^{k-1}(1-r^2)\nonumber\\
&\hspace{2cm}\times&\frac{\partial ^k}{\partial \theta
^k}\frac{(\sin
t)^{2\lambda -1}}{D_r^{\lambda +1}}dtdr\nonumber\\
&\hspace{-2cm}=&\hspace{-2cm}\frac{\lambda}{\pi\Gamma(k)}[I_\lambda
^{k,1}(\theta,\varphi)+I_\lambda ^{k,2}(\theta,\varphi)+I_\lambda
^{k,3}(\theta,\varphi)].
\end{eqnarray}

Let us decompose $I_\lambda ^{k,3}(\theta ,\varphi )$ as follows,
\begin{eqnarray}\label{I3}
I_\lambda ^{k,3}(\theta,\varphi)&=&
\int_{1-\frac{\sqrt{\sigma}}{2}}^1r^{\lambda-1}\left(\log\frac{1}{r}\right)^{k-1}
(1-r^2)\frac{\partial^k}{\partial\theta^k}\int_0^{\frac{\pi}{2}}\left(\frac{(\sin
t)^{2\lambda-1}}
{D_r^{\lambda+1}}-\frac{t^{2\lambda -1}}{(\Delta_r+r\sigma t^2)^{\lambda+1}}\right)dtdr\nonumber\\
&+&\int_{1-\frac{\sqrt{\sigma}}{2}}^1r^{\lambda-1}\left(\log\frac{1}{r}\right)^{k-1}
                                             (1-r^2)\frac{\partial^k}{\partial\theta^k}\int_0^{\frac{\pi}{2}}\frac{t^{2\lambda-1}}
                                             {(\Delta_r+r\sigma
                                             t^2)^{\lambda+1}}dtdr\nonumber\\
&=&J_\lambda ^k(\theta ,\varphi )+K_\lambda ^k(\theta ,\varphi ).
\end{eqnarray}

Moreover, we observe that by making the change of variable
$u=\sqrt{\frac{r\sigma}{\Delta_r}}t$,
\begin{eqnarray*}
\int_0^{\frac{\pi}{2}}\frac{t^{2\lambda-1}}{(\Delta_r+r\sigma
t^2)^{\lambda+1}}dt &=&\left(\int_0^\infty
-\int_{\frac{\pi}{2}}^\infty
\right)\frac{t^{2\lambda-1}}{(\Delta_r+r\sigma
t^2)^{\lambda+1}}dt\\
&=&
\frac{1}{(r\sigma)^\lambda\Delta_r}\int_0^\infty\frac{u^{2\lambda-1}}{(1+u^2)^{\lambda+1}}du
-\int_{\frac{\pi}{2}}^\infty\frac{t^{2\lambda-1}}{(\Delta_r+r\sigma
t^2)^{\lambda+1}}dt\\
&=& \frac{1}{2\lambda
(r\sigma)^\lambda\Delta_r}-\int_{\frac{\pi}{2}}^\infty\frac{t^{2\lambda-1}}{(\Delta_r+r\sigma
t^2)^{\lambda+1}}dt,\quad r\in (0,1).
\end{eqnarray*}

Then, Leibniz's rule leads to
\begin{eqnarray*}
K_\lambda ^k(\theta ,\varphi )&=&\frac{1}{2\lambda
}\left[\int_{1-\frac{\sqrt{\sigma}}{2}}^1\left(\log\frac{1}{r}\right)^{k-1}
                                             \frac{\partial^k}{\partial\theta^k}\Big(\frac{1}{\sigma ^\lambda
}\frac{(1-r^2)}
                                             {\Delta_r}\Big)\frac{dr}{r}\right]\\
&-&\int_{1-\frac{\sqrt{\sigma}}{2}}^1
r^{\lambda-1}\left(\log\frac{1}{r}\right)^{k-1}(1-r^2)\frac{\partial^k}{\partial\theta^k}\int_{\frac{\pi}{2}}^\infty\frac{t^{2\lambda-1}}{(\Delta_r+r\sigma
t^2)^{\lambda+1}}dtdr\\
&=&\frac{1}{2\lambda \sigma ^\lambda
}\left[\int_{1-\frac{\sqrt{\sigma}}{2}}^1\left(\log\frac{1}{r}\right)^{k-1}
                                             \frac{\partial^k}{\partial\theta^k}\frac{(1-r^2)}
                                             {\Delta_r}\frac{dr}{r}\right]\\
&+&\frac{1}{2\lambda }\sum_{n=0}^{k-1}\binom{k}{n}
\frac{\partial^{k-n}}{\partial\theta^{k-n}}\left(\frac{1}{\sigma^\lambda}\right)
\left[\int_{1-\frac{\sqrt{\sigma}}{2}}^1\left(\log\frac{1}{r}\right)^{k-1}
                                             \frac{\partial^n}{\partial\theta^n}\frac{(1-r^2)}
                                             {\Delta_r}\frac{dr}{r}\right]\\
&-&\int_{1-\frac{\sqrt{\sigma}}{2}}^1
r^{\lambda-1}\left(\log\frac{1}{r}\right)^{k-1}(1-r^2)\frac{\partial^k}{\partial\theta^k}\int_{\frac{\pi}{2}}^\infty\frac{t^{2\lambda-1}}{(\Delta_r+r\sigma
t^2)^{\lambda+1}}dtdr.
\end{eqnarray*}

We observe that
\begin{equation}\label{K}
K_\lambda ^k(\theta ,\varphi )=\frac{\pi \Gamma (k)}{\lambda \sigma
^\lambda }R^k(\theta, \varphi )+\sum_{\beta=1}^3K_\lambda ^{k,\beta
}(\theta ,\varphi ),
\end{equation}
where
$$
K_\lambda ^{k,1}(\theta ,\varphi )= -\frac{1}{2\lambda \sigma
^\lambda }
\left[\int_0^{1-\frac{\sqrt{\sigma}}{2}}\left(\log\frac{1}{r}\right)^{k-1}
\frac{\partial^k}{\partial\theta^k}\frac{(1-r^2)}{\Delta_r}\frac{dr}{r}\right],\hspace{3.3cm}
$$
$$
K_\lambda ^{k,2}(\theta ,\varphi )=\frac{1}{2\lambda
}\sum_{n=0}^{k-1}\binom{k}{n}
\frac{\partial^{k-n}}{\partial\theta^{k-n}}\left(\frac{1}{\sigma^\lambda}\right)
\left[\int_{1-\frac{\sqrt{\sigma}}{2}}^1\left(\log\frac{1}{r}\right)^{k-1}
                                             \frac{\partial^n}{\partial\theta^n}\frac{(1-r^2)}
                                             {\Delta_r}\frac{dr}{r}\right],
$$
and
$$
K_\lambda ^{k,3}(\theta ,\varphi
)=-\int_{1-\frac{\sqrt{\sigma}}{2}}^1
r^{\lambda-1}\left(\log\frac{1}{r}\right)^{k-1}(1-r^2)\frac{\partial^k}{\partial\theta^k}\int_{\frac{\pi}{2}}^\infty\frac{t^{2\lambda-1}}{(\Delta_r+r\sigma
t^2)^{\lambda+1}}dtdr.
$$

Thus, according to (\ref{R}), (\ref{I3}) and (\ref{K}), to establish
our result we must analyze $I_\lambda ^{k,\beta }(\theta ,\varphi
)$, $\beta =1,2$, $J_\lambda ^k(\theta ,\varphi )$ and $K_\lambda
^{k,\beta}(\theta ,\varphi )$, $\beta =1,2,3$.

Let us consider first $I_\lambda ^{k,1}(\theta ,\varphi )$. We
will see that
\begin{equation}\label{acotI1}
|I_\lambda ^{k,1}(\theta ,\varphi )|\leq \frac{C}{(\sin \varphi
)^{2\lambda +1}}\left(1+\sqrt{\frac{\sin \varphi }{|\theta
-\varphi |}}\right).
\end{equation}

Let $s=1,\ldots,k$, $j \geq 2s-k$ and $i+j=s$. We define
$$
I_{\lambda,s,i,j}^{k,1}(\theta ,\varphi )=\int_0^1 r^{i+j+\lambda
-1}\left(\log\frac{1}{r}\right)^{k-1}(1-r^2)\int_{\frac{\pi
}{2}}^\pi\frac{a^ib^j(\sin t)^{2\lambda -1}}{D_r^{\lambda
+s+1}}dtdr.
$$
According to (\ref{derivDr}) it is then sufficient to obtain
(\ref{acotI1}) when $I_{\lambda,s,i,j}^{k,1}(\theta ,\varphi )$
replaces to $I_\lambda^{k,1}(\theta,\varphi)$. By proceeding as in
Step 1, using (\ref{bj}), since $D_r\geq C$, for $0<r<\frac{1}{2}$,
and $D_r\geq (\Delta _r+\sigma )$, for $\frac{1}{2}<r<1$ and $t\in
(\frac{\pi }{2},\pi )$, it has
\begin{eqnarray*}
|I_{\lambda,s,i,j}^{k,1}(\theta,\varphi)|&\leq& C\int_0^1 r^{\lambda
-1}\left(\log\frac{1}{r}\right)^{k-1}(1-r^2)\int_{\frac{\pi}{2}}^\pi
\frac{|b|^j(\sin\,t)^{2\lambda-1}}{D_r^{\lambda +s+1}}dtdr\\
&\leq&C\left(1+(\sin \varphi)^j\int_{\frac{1}{2}}^1
\frac{(1-r)^k}{(\Delta_r+\sigma )^{\lambda +s+1}}dr \right)\\
&\leq&C\left(1+\frac{(\sin\varphi)^j}{\sigma^{\lambda+\frac{1}{4}+\frac{j}{2}}}\int_{\frac{1}{2}}^1
\frac{(1-r)^{2s-j}}{\Delta_r^{s+\frac{3}{4}-\frac{j}{2}}}dr \right)\\
&\leq&C\left(1+\frac{1}{\sigma^{\lambda+\frac{1}{4}}\Delta^{\frac{1}{4}}}
\int_0^\infty \frac{u^{2s-j}}{(1+u^2)^{s+\frac{3}{4}-\frac{j}{2}}}du\right)\\
&\leq& C\left(1+\frac{1}{(\sin\varphi )
^{2\lambda+\frac{1}{2}}\sqrt{|\theta-\varphi|}}\right) \leq
\frac{C}{(\sin\varphi )^{2\lambda+1}}\left(1+\sqrt{\frac{\sin
\varphi }{|\theta-\varphi|}}\right).
\end{eqnarray*}

For $I_\lambda ^{k,2}(\theta ,\varphi )$ we proceed in a similar
way. Consider $s=1,\ldots,k$, $j \geq 2s-k$, and $i+j=s$, and
$$
I_{\lambda,s,i,j}^{k,2}(\theta ,\varphi
)=\int_0^{1-\frac{\sqrt{\sigma }}{2}}r^{i+j+\lambda
-1}\left(\log\frac{1}{r}\right)^{k-1}(1-r^2)\int_0^{\frac{\pi
}{2}}\frac{a^ib^j(\sin t)^{2\lambda -1}}{D_r^{\lambda +s+1}}dtdr.
$$

We have that
\begin{eqnarray*}
|I_{\lambda,s,i,j}^{k,2}(\theta ,\varphi )|&\leq&
C\left(\int_0^{\frac{1}{2}}r^{\lambda-1}\left(\log\frac{1}{r}\right)^{k-1}dr+(\sin
\varphi
)^j\int_{\frac{1}{2}}^{1-\frac{\sqrt{\sigma}}{2}}\frac{(1-r)^k}{\Delta
_r^{\lambda +s+1}}dr\right)\\
&\leq&
C\left(1+(\sin\varphi)^j\int_{\frac{1}{2}}^{1-\frac{\sqrt{\sigma}}{2}}
\frac{(1-r)^{2s-j}}{(1-r)^{2\lambda+2s+2}}dr\right) \leq
\frac{C}{(\sin\varphi )^{2\lambda+1}}.
\end{eqnarray*}

Hence,
\begin{equation}\label{acotI2}
|I_\lambda ^{k,2}(\theta ,\varphi )|\leq \frac{C}{(\sin \varphi
)^{2\lambda +1}}.
\end{equation}

To estimate $J_\lambda ^k(\theta ,\varphi )$ (see (\ref{I3})), we write
$$
J_\lambda ^k(\theta ,\varphi )=J_\lambda ^{k,1}(\theta ,\varphi
)+J_\lambda ^{k,2}(\theta ,\varphi ),
$$
where
$$
J_\lambda ^{k,1}(\theta ,\varphi
)=\int_{1-\frac{\sqrt{\sigma}}{2}}^1r^{\lambda -1
}\left(\log\frac{1}{r}\right)^{k-1}(1-r^2)\frac{\partial
^k}{\partial \theta ^k}\int_0^{\frac{\pi }{2}}\frac{(\sin
t)^{2\lambda -1}-t^{2\lambda -1}}{D_r^{\lambda +1}}dtdr,
$$
and
$$
J_\lambda ^{k,2}(\theta ,\varphi
)=\int_{1-\frac{\sqrt{\sigma}}{2}}^1r^{\lambda -1
}\left(\log\frac{1}{r}\right)^{k-1}(1-r^2)\frac{\partial
^k}{\partial \theta ^k}\int_0^{\frac{\pi }{2}}t^{2\lambda
-1}\left(\frac{1}{D_r^{\lambda +1}}-\frac{1}{(\Delta _r+r\sigma
t^2)^{\lambda+1}}\right)dtdr.
$$

To analyze $J_\lambda ^{k,1}(\theta ,\varphi )$ assume, as above,
$s=1,...,k$, $j\geq 2s-k$, and $i+j=s$  and consider
$$
J_{\lambda,s,i,j}^{k,1}(\theta ,\varphi
)=\int_{1-\frac{\sqrt{\sigma}}{2}}^1r^{i+j+\lambda-1
}\left(\log\frac{1}{r}\right)^{k-1}(1-r^2)\int_0^{\frac{\pi
}{2}}\frac{a^ib^j[(\sin t)^{2\lambda -1}-t^{2\lambda
-1}]}{D_r^{\lambda +s+1}}dtdr.
$$

By using the mean value theorem and that $|b|^j\leq C(|\theta
-\varphi |^j+(t^2\sin \varphi )^j)$, $t\in (0,\frac{\pi }{2})$, we
have
\begin{eqnarray}\label{Jksij1}
|J_{\lambda,s,i,j}^{k,1}(\theta ,\varphi
)|&\leq&C\int_{1-\frac{\sqrt{\sigma}}{2}}^1(1-r)^k\int_0^{\frac{\pi
}{2}}\frac{|b|^jt^{2\lambda +1}}{D_r^{\lambda +s+1}}dtdr\\
&\leq&C\int_{1-\frac{\sqrt{\sigma}}{2}}^1(1-r)^k\int_0^{\frac{\pi
}{2}}\frac{[|\theta
-\varphi |^j+(t^2\sin \varphi )^j]t^{2\lambda +1}}{(\Delta _r+\sigma t^2)^{\lambda +s+1}}dtdr\nonumber\\
&=&C(J_{\lambda,s,i,j}^{k,1,1}(\theta ,\varphi
)+J_{\lambda,s,i,j}^{k,1,2}(\theta ,\varphi )).\nonumber
\end{eqnarray}

We can obtain for each term in the last sum the following estimates.
Firstly,
\begin{eqnarray*}
|J_{\lambda,s,i,j}^{k,1,1}(\theta ,\varphi )|&\leq& C\frac{|\theta -
\varphi |^j}{\sigma ^{\lambda
+1}}\int_{1-\frac{\sqrt{\sigma}}{2}}^1\frac{(1-r)^k}{\Delta
_r^s}\int_0^{\frac{\pi }{2}\sqrt{\frac{\sigma }{\Delta
_r}}}\frac{u^{2\lambda +1}}{(1+u^2)^{\lambda +s+1}}dudr\\
&\leq&C\frac{|\theta - \varphi |^j}{\sigma ^{\lambda
+1}}\int_{1-\frac{\sqrt{\sigma}}{2}}^1\frac{(1-r)^k}{(\Delta
+(1-r)^2)^{s-\frac{j}{2}+\frac{j}{2}}}dr\\
&\leq&C\frac{1}{\sigma ^{\lambda
+1}}\int_{1-\frac{\sqrt{\sigma}}{2}}^1(1-r)^{k-2s+j}dr\leq
C\frac{\sigma^{\frac{k+j}{2}-s}}{\sigma^{\lambda +\frac{1}{2}}} \leq
\frac{C}{(\sin \varphi )^{2\lambda +1}}.
\end{eqnarray*}

We also have
\begin{eqnarray*}
|J_{\lambda,s,i,j}^{k,1,2}(\theta ,\varphi )|&\leq& C(\sin \varphi
)^j\int_{1-\frac{\sqrt{\sigma}}{2}}^1(1-r)^k\int_0^{\frac{\pi
}{2}}\frac{t^{2\lambda +2j+1}}{((1-r)^2+\sigma t^2)^{\lambda
+s+1}}dtdr\\
&\leq&C\frac{(\sin \varphi)^j}{\sigma ^{\lambda
+\frac{j}{2}+\frac{3}{4}}}\int_{1-\frac{\sqrt{\sigma}}{2}}^1(1-r)^{k-2s+j-\frac{1}{2}}dr\int_0^{\frac{\pi
}{2}}t^{j-\frac{1}{2}}dt\\
&\leq& C\frac{\sigma^{\frac{k+j}{2}-s}}{\sigma^{\lambda
+\frac{1}{2}}} \leq \frac{C}{(\sin \varphi )^{2\lambda +1}}.
\end{eqnarray*}

Thus we get by (\ref{derivDr})
\begin{equation}\label{J1}
|J_\lambda ^{k,1}(\theta ,\varphi )|\leq
 \frac{C}{(\sin \varphi )^{2\lambda +1}}.
\end{equation}

In the same way, and according to (\ref{derivDr}), to see the
estimate for $J_\lambda ^{k,2}(\theta ,\varphi )$ we analyze
\begin{eqnarray*}
J_{\lambda,s,i,j}^{k,2}(\theta ,\varphi
)&=&\int_{1-\frac{\sqrt{\sigma}}{2}}^1r^{\lambda +i+j-1}\left(\log
\frac{1}{r}\right)^{k-1}(1-r^2)\\
&\times&\int_0^{\frac{\pi }{2}} t^{2\lambda
-1}\left(\frac{a^ib^j}{D_r^{\lambda +s+1}}-\frac{A^iB^j}{(\Delta
_r+r\sigma t^2)^{\lambda +s+1}}\right)dtdr,
\end{eqnarray*}
for every $s=1,...,k$, $j\geq 2s-k$, and $i+j=s$. Here $A=\cos
(\theta -\varphi )-\frac{\sigma t^2}{2}$ and $B=\frac{\partial
A}{\partial \theta }$.

By using the mean value theorem we obtain, for $t\in (0,\frac{\pi
}{2})$,
\begin{eqnarray*}
\left|\frac{a^ib^j}{D_r^{\lambda +s+1}}-\frac{A^iB^j}{(\Delta
_r+r\sigma t^2)^{\lambda +s+1}}\right|\\
&\hspace{-8cm}\leq&\hspace{-4cm}
\left|a^ib^j\left(\frac{1}{D_r^{\lambda +s+1}}-\frac{1}{(\Delta
_r+r\sigma t^2)^{\lambda
+s+1}}\right)\right|+\left|\frac{(a^i-A^i)b^j+A^i(b^j-B^j)}{(\Delta
_r+r\sigma
t^2)^{\lambda +s+1}}\right|\\
&\hspace{-8cm}\leq &\hspace{-4cm}C\left(\frac{|b|^j\sigma
t^4}{(\Delta _r+r\sigma t^2)^{\lambda
+s+2}}+\frac{|b|^{j-1}\sqrt{\sigma} t^4}{(\Delta _r+r\sigma
t^2)^{\lambda
+s+1}}\right)\\
&\hspace{-8cm}\leq &\hspace{-4cm}C\left(\frac{|b|^jt^2}{(\Delta
_r+\sigma t^2)^{\lambda +s+1}}+ \frac{|b|^{j-1}\sqrt{\sigma}
t^3}{(\Delta _r+\sigma t^2)^{\lambda +s+1}}\right),
\end{eqnarray*}
where the second term in the two last sums does not appear when
$j=0$. Then, we write
$$
|J_{\lambda,s,i,j}^{k,2}(\theta ,\varphi )|\leq
C(J_{\lambda,s,i,j}^{k,2,1}(\theta ,\varphi
)+J_{\lambda,s,i,j}^{k,2,2}(\theta ,\varphi )),
$$
where
$$
J_{\lambda,s,i,j}^{k,2,1}(\theta ,\varphi
)=\int_{1-\frac{\sqrt{\sigma}}{2}}^1(1-r)^k\int_0^{\frac{\pi
}{2}}\frac{|b|^jt^{2\lambda +1}}{(\Delta _r+\sigma t^2)^{\lambda
+s+1}}dtdr,
$$
and
$$
J_{\lambda,s,i,j}^{k,2,2}(\theta ,\varphi )=\sqrt{\sigma}
\int_{1-\frac{\sqrt{\sigma}}{2}}^1(1-r)^k\int_0^{\frac{\pi
}{2}}\frac{|b|^{j-1}t^{2\lambda +2}}{(\Delta _r+\sigma t^2)^{\lambda
+s+1}}dtdr,\quad \mbox{ when }j\geq 1.
$$

We observe that $J_{\lambda,s,i,j}^{k,2,1}(\theta ,\varphi )$ was
already analyzed in (\ref{Jksij1}). On the other hand, when $j\geq
1$, we can use that $|b|^{j-1}\leq C(|\theta -\varphi
|^{j-1}+(t^2\sin \varphi )^{j-1})$, $t\in (0,\frac{\pi }{2})$, and
proceed as in the estimation of (\ref{Jksij1}) to study
$J_{\lambda,s,i,j}^{k,2,2}$. Thus we get that
\begin{equation}\label{J2}
|J_\lambda ^{k,2}(\theta ,\varphi )|\leq
\frac{C}{(\sin \varphi )^{2\lambda +1}}.
\end{equation}

By combining (\ref{J1}) and (\ref{J2}) we conclude that
\begin{equation}\label{acotJk}
|J_\lambda ^k(\theta ,\varphi )|\leq \frac{C}{(\sin \varphi
)^{2\lambda +1}}.
\end{equation}

Finally we deal with $K_\lambda ^{k,\beta }(\theta ,\varphi )$,
$\beta =1,2,3$ (see (\ref{K})).

By invoking (\ref{derivDr}) with $t=0$ and $\lambda=0$, it has that
$$
\frac{\partial ^{\ell }}{\partial \theta ^{\ell
}}\left(\frac{1}{\Delta _r}\right)=\sum_{s,i,j} c_{\ell
,s,i,j}\frac{r^{i+j}(\cos (\theta -\varphi))^i(-\sin (\theta
-\varphi ))^j}{\Delta _r^{1+s}} ,\quad \ell \in \mathbb{N},
$$
where $c_{\ell ,s,i,j}\not =0$ only if $s=1,...,\ell $, $j\geq
2s-\ell$ and $i+j=s$.

Also, for every $n=0,...,k-1$,
\begin{equation}\label{derivsigma}
\left|\frac{\partial ^{k-n}}{\partial \theta
^{k-n}}\Big(\frac{1}{\sigma ^\lambda}\Big)\right|\leq C(\sin \theta
)^{-\lambda -k+n}(\sin \varphi )^{-\lambda }\leq C\sigma ^{-\lambda
-\frac{k-n}{2}}.
\end{equation}

Hence, by proceeding as above, to estimate $K_\lambda ^{k,\beta
}(\theta ,\varphi )$, $\beta =1,2,3$, it is sufficient to study the
following integrals:
$$
K^{k,1}_{\lambda,s,i,j}(\theta ,\varphi
)=\frac{1}{\sigma^\lambda}\int_0^{1-\frac{\sqrt{\sigma
}}{2}}r^{i+j-1}\left(\log \frac{1}{r}\right)^{k-1}\frac{(1-r^2)(\cos
(\theta -\varphi ))^i(-\sin (\theta -\varphi ))^j} {\Delta
_r^{s+1}}dr,
$$
when $s=1,...,k$, $j\geq 2s-k$ and  $i+j=s$;
$$
K^{k,2,0}_\lambda(\theta,\varphi)=\sigma^{-k/2-\lambda}\int_{1-\frac{\sqrt{\sigma}}{2}}^1\left(\log\frac{1}{r}\right)^{k-1}
\frac{(1-r^2)}{\Delta_r}\frac{dr}{r};
$$
$$
K^{k,2,n}_{\lambda,s,i,j}(\theta ,\varphi
)=\frac{\partial^{k-n}}{\partial \theta ^{k-n}}\Big(\frac{1}{\sigma
^\lambda }\Big)\int_{1-\frac{\sqrt{\sigma
}}{2}}^1r^{i+j-1}\left(\log
\frac{1}{r}\right)^{k-1}\frac{(1-r^2)(\cos (\theta -\varphi
))^i(-\sin (\theta -\varphi ))^j} {\Delta _r^{s+1}}dr,
$$
for each $n=1,...,k-1$, $s=1,...,n$, $j\geq 2s-n$, and $i+j=s$; and
$$
K^{k,3}_{\lambda,s,i,j}(\theta ,\varphi )=\int_{1-\frac{\sqrt{\sigma
}}{2}}^1r^{i+j+\lambda
-1}\left(\log\frac{1}{r}\right)^{k-1}(1-r^2)\int_{\frac{\pi
}{2}}^\infty \frac{A^iB^jt^{2\lambda -1}}{(\Delta _r+r\sigma
t^2)^{\lambda +s+1}}dtdr,
$$
when $s=1,...,k$, $j\geq 2s-k$, and $i+j=s$. Here, as before,
$A=\cos (\theta -\varphi )-\frac{\sigma t^2}{2}$ and
$B=\frac{\partial A}{\partial \theta }$.

Consider $s=1,...,k$, $j\geq 2s-k$, and $i+j=s$. We can write
\begin{eqnarray*}
|K^{k,1}_{\lambda,s,i,j}(\theta ,\varphi )|&\leq
&C\frac{1}{\sigma^\lambda}\left(\int_0^{\frac{1}{2}}r^{i+j-1}
\left(\log
\frac{1}{r}\right)^{k-1}dr+\int_{\frac{1}{2}}^{1-\frac{\sqrt{\sigma }}{2}}\frac{(1-r)^k|\theta -\varphi |^j}{(\Delta +(1-r)^2)^{s+1}}dr\right)\\
&\leq &C\frac{1}{\sigma^\lambda}\left(1+\frac{|\theta -\varphi |^j}{\Delta ^{\frac{j}{2}}}\int_{\frac{1}{2}}^{1-\frac{\sqrt{\sigma }}{2}}(1-r)^{k-2s+j-2}dr\right)\\
&\leq&C\frac{1}{\sigma^\lambda}\left(1+\frac{\sigma
^{\frac{k+j}{2}-s}}{\sqrt{\sigma }}\right)\le \frac{C}{(\sin \varphi
)^{2\lambda +1}}.
\end{eqnarray*}

On the other hand, by using (\ref{derivsigma}), for each
$n=1,...,k-1$, $s=1,...,n$, $j\geq 2s-n$, and $i+j=s$, we obtain
\begin{eqnarray*}
|K^{k,2,n}_{\lambda,s,i,j}(\theta ,\varphi )|&\leq &C\frac{|\theta
-\varphi |^j}{\sigma ^{\lambda +\frac{k-n}{2}}}
\int_{1-\frac{\sqrt{\sigma}}{2}}^1\frac{(1-r)^k}{(\Delta +(1-r)^2)^{s+1}}dr\\
&\leq&C\frac{|\theta -\varphi |^j}{\sigma ^{\lambda +\frac{k-n}{2}}\Delta ^{\frac{j}{2}+\frac{1}{4}}}
\int_{1-\frac{\sqrt{\sigma}}{2}}^1(1-r)^{k-2s+j-\frac{3}{2}}dr\\
&\leq&C\frac{\sigma ^{\frac{n+j}{2}-s}}{\sigma ^{\lambda +\frac{1}{4}}\Delta ^{\frac{1}{4}}}\leq \frac{C}{(\sin \varphi )^{2\lambda +1}}
\sqrt{\frac{\sin \varphi }{|\theta -\varphi |}}.
\end{eqnarray*}

In a similar way we obtain
$$
|K^{k,2,0}_\lambda(\theta,\varphi)|\le \frac{C}{(\sin \varphi
)^{2\lambda +1}} \sqrt{\frac{\sin \varphi }{|\theta -\varphi |}}.
$$

Finally, assume $s=1,\ldots,k$, $j\geq 2s-k$, and $i+j=s$. By taking
into account that $|B|^j\leq C(t^{2}\sin \varphi )^j$ and $|A|^i\le
C(1+\sigma^i t^{2i})$, $t\geq \frac{\pi }{2}$, and the last formula
in \cite[p. 37]{MS}, we obtain
\begin{eqnarray*}
K^{k,3}_{\lambda,s,i,j}(\theta ,\varphi
)&\leq&C\int^1_{1-\frac{\sqrt{\sigma}}{2}}(1-r)^k\int_{\frac{\pi}{2}}^\infty
\frac{|A|^i|B|^jt^{2\lambda-1}}
{(\Delta_r+\sigma t^2)^{\lambda+s+1}}dtdr\\
&\leq &C\frac{(\sin \varphi )^j}{\sigma ^{\lambda +j}}\Big(
\int_{1-\frac{\sqrt{\sigma }}{2}}^1\frac{(1-r)^k}{\Delta _r^{s+1-j}}
\int_{\frac{\pi }{2}\sqrt{\frac{\sigma }{\Delta
_r}}}^\infty\frac{u^{2\lambda +2j-1}}{(1+u^2)^{\lambda
+s+1}}dudr\\
&+&\int_{1-\frac{\sqrt{\sigma }}{2}}^1\frac{(1-r)^k}{\Delta
_r}\int_{\frac{\pi }{2}\sqrt{\frac{\sigma }{\Delta
_r}}}^\infty\frac{u^{2\lambda +2j+2i-1}}{(1+u^2)^{\lambda
+s+1}}dudr\Big)\\
&\leq &C\frac{(\sin \varphi )^j}{\sigma ^{\lambda +j+1}}
\Big(\int_{1-\frac{\sqrt{\sigma }}{2}}^1\frac{(1-r)^k}{(\Delta
+(1-r)^2)^{s-j}}dr+\int_{1-\frac{\sqrt{\sigma }}{2}}^1(1-r)^kdr\Big)\\
&\leq&C\frac{(\sin \varphi )^j}{\sigma ^{\lambda +j+1}}
\int_{1-\frac{\sqrt{\sigma }}{2}}^1(1-r)^{k-2s+2j}dr \leq
C\frac{(\sin \varphi )^j\sigma^{\frac{k+j}{2}-s}}{\sigma ^{\lambda
+\frac{j}{2}+\frac{1}{2}}}\leq \frac{C}{(\sin \varphi )^{2\lambda
+1}}.
\end{eqnarray*}

Thus, we have obtained that
\begin{equation}\label{acotK}
\sum_{\beta =1}^3|K_{\lambda }^{k,\beta} (\theta ,\varphi )|\leq
\frac{C}{(\sin \varphi )^{2\lambda +1}}\left(1+\sqrt{\frac{\sin
\varphi }{|\theta -\varphi |}}\right).
\end{equation}

By considering (\ref{R}), (\ref{I3}), (\ref{K}) and the estimations
(\ref{acotI1}), (\ref{acotI2}), (\ref{acotJk}) and (\ref{acotK}) we
conclude that, when $\frac{\theta}{2}\le\varphi
\le\frac{3\theta}{2}$, $\theta\neq\varphi$,
$$
\left|R_\lambda ^k(\theta ,\varphi )-\frac{R^k(\theta,\varphi
)}{\sigma ^\lambda}\right|\leq  \frac{C}{(\sin \varphi )^{2\lambda
+1}}\left(1+\sqrt{\frac{\sin \varphi }{|\theta -\varphi |}}\right),
$$
and the proof of Lemma \ref{lema} is finished.
\end{proof}

{\bf Step 3.} We now establish that the $k$-th Riesz transform in
the circle is a principal value integral operator, that is,
\begin{multline}\label{Rkpv}
\frac{d^k}{d\theta^k}\int_0^\pi f(\varphi)\int_0^1
\left(\log\frac{1}{r}\right)^{k-1}
\left(\frac{1-r^2}{1-2r\cos(\theta-\varphi)+r^2}-1\right)\frac{dr}{r} d\varphi\\
=2\pi \Gamma (k)\lim_{\varepsilon \rightarrow
0^+}\int_{0,|\theta-\varphi| > \varepsilon}^\pi f(\varphi)R^k(\theta
,\varphi )d\varphi + \beta_kf(\theta),\hspace{3cm}
\end{multline}
for every $\theta \in (0,\pi)$, and where $\beta_k=0$ when $k$ is
odd, and $\beta_k=2\pi (-1)^{\frac{k}{2}}\Gamma (k)$, when $k$ is even.

Let us consider the function
$$
H^k(\omega)=\int_0^1\left(\log\frac{1}{r}\right)^{k-1}\frac{\partial^{k-1}}{\partial\omega^{k-1}}\left(\frac{1-r^2}{1-2r\cos\omega+r^2}-1\right)
\frac{dr}{r},\quad \omega \in \mathbb{R} \setminus \{2k\pi:k\in
\mathbb{Z}\}.
$$
Firstly we are going to analyze the
behavior of $H^k(w)$ when $w \rightarrow 0^+$. We have that
\begin{eqnarray*}
H^1(w) &=& \int_0^1 \left(\frac{1-r^2}{1-2r\cos w + r^2}-1\right)\frac{dr}{r}  \\
&=&- \log(2(1-\cos w)),\quad  w \in \mathbb{R}\setminus \{2k\pi:k\in
\mathbb{Z}\}.
\end{eqnarray*}
Then, $\displaystyle \lim_{w \rightarrow 0} w H^1(w)=0$.

Assume that $k \in \mathbb N$, $k \geq 2$. According to
(\ref{derivDr}), it has
\begin{eqnarray*}
H^k(w) &=&  \sum_{\begin{subarray}{c}s=1,\ldots,k-1\\j \geq
2s-k+1\\i+j=s
                                     \end{subarray}} c_{k-1,s,i,j}\left(\int_0^{\frac{1}{2}}+ \int_{\frac{1}{2}}^1 \right)\frac{r^{i+j}(\cos w)^i (-\sin w)^j}{(1-2r\cos w + r^2)^{s+1}}(1-r^2)\left(\log\frac{1}{r} \right)^{k-1} \frac{dr}{r} \\
&=&  \sum_{\begin{subarray}{c}s=1,\ldots,k-1\\j \geq 2s-k+1\\i+j=s
                                     \end{subarray}} c_{k-1,s,i,j} \left(I^0_{k,s,i,j}(w) + I^1_{k,s,i,j}(w)\right), \quad w \in \mathbb{R}\setminus \{2k\pi:k\in
\mathbb{Z}\}.
\end{eqnarray*}
Let $s=1,\ldots, k-1$, $i+j=s$, $j \geq 2s-k+1$. By using the
dominated convergence theorem we obtain
$$
\lim_{w \rightarrow 0}I_{k,s,i,j}^0(w)= \left\{\begin{array}{ll}
                                              0, & j \geq 1;\\
                                              \displaystyle\int_{0}^{1/2}\left(\log\frac{1}{r}\right)^{k-1}\frac{(1-r^2)r^{s-1}}{(1-r)^{2(s+1)}}dr, & j=0;
                                              \end{array}\right.
$$
and, when $2s-k+1 < 0$,
$$
\lim_{w \rightarrow 0}I_{k,s,i,j}^1(w)= \left\{\begin{array}{ll}
                                              0, & j \geq 1;\\
                                              \displaystyle\int_{1/2}^{1}\left(\log\frac{1}{r}\right)^{k-1}\frac{(1-r^2)r^{s-1}}{(1-r)^{2(s+1)}}dr, & j=0.
                                              \end{array}\right.
$$
Assume now $2s-k+1 \geq 0$. We have that
$$
\left|I^1_{k,s,i,j}(w) \right| \leq
C|w|^{j+k-2s-1}\int_0^{\frac{1}{2|w|}} \frac{u^k}{(1+u^2)^{s+1}}du
\leq C|w|^{j+k-2s-\frac{3}{2}},\quad  w \in (-\pi,\pi)\setminus \{0\}.
$$
Then,
$$
\lim_{w \rightarrow 0} w I^1_{k,s,i,j}(w)=0,
$$
and if  $j>2s-k+1$,
$$
\lim_{w\rightarrow 0} I^1_{k,s,i,j}(w) = 0.
$$

Also, if $j=2s-k+1>0$ (as will be the case if k is even), by using
mean value theorem it follows
\begin{eqnarray*}
\lim_{w \rightarrow 0^+} I^1_{k,s,i,j}(w) &=& -2\lim_{w\rightarrow 0^+}\left(\frac{\sin w}{w}\right)^{2s-k+1}\int_0^{\frac{1}{2w}} \frac{u^k}{(1+u^2)^{s+1}}du \\
&=& -2\int_0^\infty \frac{u^{k}}{(1+ u^2)^{s+1}}du=-B\Big(\frac{k+1}{2},s-\frac{k-1}{2}\Big),
\end{eqnarray*}
where $B(x,y)$, $x,y>0$, represents the Beta Euler's function.

By combining the above estimates we conclude that $\displaystyle \lim_{w\rightarrow 0}wH^k(w)=0$, when $k$ is odd. Assume now that $k$ is even. In this case we obtain that
\begin{eqnarray*}
\lim_{w\rightarrow 0^+}H^k(w)&=&\sum_{s=1}^{\frac{k}{2}-1}c_{k-1,s,s,0}\int_0^1\Big(\log \frac{1}{r}\Big)^{k-1}\frac{(1-r^2)r^{s-1}}{(1-r)^{2(s+1)}}dr\\
&-&\sum_{s=\frac{k}{2}}^{k-1}c_{k-1,s,k-s-1, 2s-k+1}B\Big(\frac{k+1}{2},s-\frac{k-1}{2}\Big).
\end{eqnarray*}

By taking into account (\ref{FaadiBruno}) and the duplication formula for the Gamma Euler's function, we can write
\begin{eqnarray*}
\lim_{w\rightarrow 0^+}H^k(w)&=&-\sum_{s=\frac{k}{2}}^{k-1}\frac{(-1)^{s+1}(k-1)!s!}{2^{k-2s-1}(2s-k+1)!(k-s-1)!}B\Big(\frac{k+1}{2},s-\frac{k-1}{2}\Big)\\
&\hspace{-5cm}=&\hspace{-2.5cm}\frac{\pi (\Gamma (k))^2}{2^{k-2}\Gamma (\frac{k}{2})}\sum_{s=\frac{k}{2}}^{k-1}\frac{(-1)^s}{(2s-k+1)(k-s-1)!(s-\frac{k}{2})!}=\frac{(-1)^{\frac{k}{2}}\pi (\Gamma (k))^2}{2^{k-2}(\Gamma (\frac{k}{2}))^2}\sum_{r=0}^{\frac{k}{2}-1}(-1)^r {{\frac{k}{2}-1}\choose {r}}\frac{1}{2r+1}\\
&\hspace{-5cm}=&\hspace{-2.5cm}\frac{(-1)^{\frac{k}{2}}\pi (\Gamma (k))^2}{2^{k-2}(\Gamma (\frac{k}{2}))^2}\int_0^1(1-t^2)^{\frac{k}{2}-1}dt=(-1)^{\frac{k}{2}}\pi \Gamma (k).
\end{eqnarray*}

By proceeding as in Step 1 we can see that
\begin{eqnarray}\label{k-1}
\frac{d^{k-1}}{d\theta^{k-1}}\int_0^\pi f(\varphi)\int_0^1
\left(\log\frac{1}{r}\right)^{k-1}\left(\frac{1-r^2}{1-2r\cos(\theta-\varphi)+r^2}-1\right)\frac{dr}{r} d\varphi\nonumber\\
&\hspace{-20cm}=&\hspace{-10cm} \int_0^\pi f(\varphi)\int_0^1
\left(\log\frac{1}{r}\right)^{k-1}\frac{\partial^{k-1}}{\partial\theta^{k-1}}\left(\frac{1-r^2}{1-2r\cos(\theta-\varphi)+r^2}-1\right)\frac{dr}{r}
d\varphi, \quad \theta \in (0,\pi).
\end{eqnarray}
It is clear that
\begin{multline*}
\frac{d^{k-1}}{d\theta^{k-1}} \int_0^\pi f(\varphi)\int_0^1 \left(\log\frac{1}{r}\right)^{k-1}\left(\frac{1-r^2}{1-2r\cos(\theta-\varphi)+r^2}-1 \right)\frac{dr}{r} d\varphi \\
= \int_{-\pi}^{\pi} \tilde{f}(\varphi)H^k(\theta-\varphi)\;d\varphi, \quad  \theta \in (0,\pi),\hspace{3cm}
\end{multline*}
where $ \tilde{f}(\varphi) = \left\{\begin{array}{ll}
                                        f(\varphi), &  \varphi \in [0,\pi) \\
                                        0, & \varphi \in (-\pi,0)
                                        \end{array} \right. $,
and  $\tilde{f}(\varphi) = \tilde{f}(\varphi+2\pi)$, $\varphi \in \mathbb R$.

Also, since $H^k\in L^1(-\pi,\pi)$, we have that,
\begin{multline*}
\frac{d}{d\theta}\int_{-\pi}^\pi \tilde{f}(\varphi) H^k(\theta-\varphi)d\varphi
= \frac{\partial}{\partial\theta}\int_{-\pi}^\pi \tilde{f}(\theta-u) H^k(u)du =\int_{-\pi}^\pi
\frac{\partial}{\partial\theta}\tilde{f}(\theta-u) H^k(u)du\\
\shoveleft{\hspace{15mm}= -\lim_{\varepsilon \rightarrow 0^+} \int_{-\pi, |u|>\varepsilon}^\pi \frac{d}{d u}[\tilde{f}(\theta-u)] H^k(u)du }\\
\shoveleft{\hspace{15mm}=-\lim_{\varepsilon \rightarrow
0^+}\left(\tilde{f}(\theta-u) H^k(u)\right]_{\varepsilon}^\pi
- \int_\varepsilon^\pi \tilde{f}(\theta-u)\frac{d}{d u}H^k(u) du }\\
\shoveleft{\hspace{27mm}+ \left. \left. \tilde{f}(\theta-u)
H^k(u)\right]_{-\pi}^{-\varepsilon}
- \int^{-\varepsilon}_{-\pi} \tilde{f}(\theta-u)\frac{d}{d u}H^k(u) du \right)}\\
\shoveleft{\hspace{15mm}=-\lim_{\varepsilon \rightarrow 0^+}\left(
\tilde{f}(\theta-\pi)H^k(\pi)
-\tilde{f}(\theta-\varepsilon)H^k(\varepsilon) + \tilde{f}(\theta+\varepsilon)H^k(-\varepsilon) \right.}\\
\ - \tilde{f}(\theta+\pi)H^k(-\pi)- \int_{-\pi,|u|>\varepsilon}^\pi
\tilde{f}(\theta-u)\frac{d}{d u}H^k(u) du \Big),\quad \theta \in
(0,\pi ).\hspace{13mm}
\end{multline*}
Since the function $H^k$ is even when $k$ is odd and $H^k$ is odd when $k$ is even, we conclude that
\begin{multline*}
\frac{d}{d\theta}\int_0^\pi f(\varphi)H^k(\theta-\varphi) d\varphi = \lim_{\varepsilon \rightarrow 0^+}\int_{-\pi,|\theta-\varphi|>\varepsilon}^\pi \tilde f(\varphi)
 \left(\frac{d}{du}H^k\right)(\theta-\varphi)d\varphi\\
\shoveleft{\hspace{3mm}
 -\lim_{\varepsilon \rightarrow
0^+}(f(\theta+\varepsilon)-f(\theta-\varepsilon))H^k(\varepsilon)= \lim_{\varepsilon \rightarrow 0^+}
\int_{0,|\theta-\varphi|>\varepsilon}^\pi f(\varphi)
\frac{\partial}{\partial \theta}
H^k(\theta-\varphi)d\varphi,\quad \theta \in (0,\pi ), } \\
\end{multline*}
when $k$ is odd, and
\begin{multline*}
\frac{d}{d\theta}\int_0^\pi f(\varphi)H^k(\theta-\varphi)d\varphi \\
=\lim_{\varepsilon \rightarrow 0^+} (f(\theta +
\varepsilon)+f(\theta - \varepsilon))H^k(\varepsilon) +
\lim_{\varepsilon \rightarrow 0^+} \int_{0,|\theta - \varphi|>
\varepsilon}^\pi f(\varphi)\frac{\partial}{\partial
\theta}H^k(\theta-\varphi)d\varphi \\
=2f(\theta )(-1)^{\frac{k}{2}}\pi \Gamma (k) +
\lim_{\varepsilon \rightarrow 0^+} \int_{0,|\theta - \varphi|>
\varepsilon}^\pi f(\varphi)\frac{\partial}{\partial
\theta}H^k(\theta-\varphi)d\varphi ,\quad \theta \in (0,\pi
),\hspace{15mm}
\end{multline*}
when $k$ is even.

{\bf Step 4.} We now finish the proof of Theorem \ref{thprin}. We
firstly write, according to (\ref{lderivada}),
\begin{multline*}
\frac{d^{k-1}}{d\theta^{k-1}}L^{-\frac{k}{2}}_\lambda
f(\theta)=\int_0^\pi
f(\varphi)R^{k,k-1}_\lambda(\theta,\varphi)dm_\lambda(\varphi)=\int_0^\pi
f(\varphi)\frac{R^{k,k-1}(\theta,\varphi)}{(\sin\theta\sin\varphi)^\lambda}dm_\lambda(\varphi)\\
\hspace{2cm}+\int_0^\pi f(\varphi)\left(R_\lambda
^{k,k-1}(\theta,\varphi)-\frac{R^{k,k-1}(\theta,\varphi)}{(\sin\theta\sin\varphi)^\lambda}
\right)dm_\lambda(\varphi),
\quad \theta \in (0,\pi),\\
\end{multline*}
where, for every $\theta,\varphi \in (0,\pi)$,
$$
R^{k,k-1}(\theta,\varphi)=\frac{1}{2\pi \Gamma (k)}\int_0^1
\left(\log\frac{1}{r}\right)^{k-1}\frac{\partial^{k-1}}{\partial\theta^{k-1}}\left(\frac{1-r^2}{1-2r\cos(\theta-\varphi)+r^2}-1\right)\frac{dr}{r}.
$$
Moreover, by (\ref{Rkpv}) and (\ref{k-1}) we have, for every $\theta
\in (0,\pi )$,
\begin{multline}\label{acotacion1VP}
\frac{d}{d\theta}\int_0^\pi
f(\varphi)\frac{R^{k,k-1}(\theta,\varphi)}{(\sin\theta\sin\varphi)^\lambda}\;dm_\lambda(\varphi)=\frac{1}{2\pi\Gamma(k)}\frac{d}{d\theta}\left(\frac{1}{(\sin\theta)^\lambda}\right.\\
\shoveleft{\hspace{5mm}\left. \times 
\frac{d^{k-1}}{d\theta^{k-1}}\int_0^\pi
\frac{f(\varphi)}{(\sin\varphi)^\lambda}\int_0^1
\left(\log\frac{1}{r}\right)^{k-1}
\left(\frac{1-r^2}{1-2r\cos(\theta-\varphi)+r^2}-1\right)\frac{dr}{r}dm_\lambda(\varphi)\right)}\\
\shoveleft{\hspace{-1cm}=-\frac{\lambda\cos\theta}{(\sin\theta)^{\lambda+1}}\int_0^\pi\frac{f(\varphi)}{(\sin\varphi)^\lambda}
R^{k,k-1}(\theta,\varphi)dm_\lambda(\varphi)}+\frac{1}{2\pi\Gamma(k)}
\frac{1}{(\sin\theta)^\lambda}\\
\times \frac{d^k}{d\theta^k}\int_0^\pi
\frac{f(\varphi)}{(\sin\varphi)^\lambda}\int_0^1
\left(\log\frac{1}{r}\right)^{k-1}
\left(\frac{1-r^2}{1-2r\cos(\theta-\varphi)+r^2}-1\right)\frac{dr}{r}dm_\lambda(\varphi)\hspace{1cm}
\end{multline}
\begin{multline*}
=-\frac{\lambda\cos\theta}{(\sin\theta)^{\lambda+1}}\int_0^\pi\frac{f(\varphi)}{(\sin\varphi)^\lambda}
R^{k,k-1}(\theta,\varphi)dm_\lambda(\varphi)\\
+\frac{1}{(\sin\theta)^\lambda}\lim_{\varepsilon \rightarrow
0^+}\int^\pi_{0,|\theta-\varphi|>\varepsilon}
\frac{f(\varphi)}{(\sin\varphi)^\lambda}R^k(\theta ,\varphi
)dm_\lambda(\varphi)+\gamma _kf(\theta
),\hspace{4.5cm}
\end{multline*}
where the integral after the last equal sign is absolutely convergent and $\gamma _k=0$, if $k$ is odd and $\gamma _k=(-1)^{\frac{k}{2}}$, when $k$ is even.

A careful study of Lemma \ref{lema} and again a distributional
argument allow us to justify the differentiation under the integral
sign (see Lemma \ref{derint1} in Appendix) to get
\begin{multline}\label{acotacion2VP}
\frac{d}{d \theta}\!\!\int_0^\pi \!\!\!\!f(\varphi)\!
\left(\!R_\lambda^{k,k-1}(\theta,\varphi)\!-\!
\frac{R^{k,k-1}(\theta,\varphi)}{(\sin\theta\sin\varphi)^\lambda}\!\right)\!dm_\lambda(\varphi)\!=\!\frac{\lambda \cos \theta
}{(\sin\theta)^{\lambda+1}}\int_0^{\pi}\!\!\frac{f(\varphi)}{(\sin\varphi)^\lambda}
R^{k,k-1}(\theta,\varphi)dm_\lambda(\varphi)\\
+\int_0^{\pi}f(\varphi)\left(R_\lambda^k(\theta,\varphi)-
\frac{R^k(\theta,\varphi)}{(\sin\theta\sin\varphi)^\lambda}\right) dm_\lambda(\varphi),\quad \mbox{a.e. }\theta
\in(0,\pi),\hspace{3cm}
\end{multline}
where all the integrals are absolutely convergent.

By combining (\ref{acotacion1VP}) and (\ref{acotacion2VP}) we conclude that
\begin{eqnarray*}
\frac{d^k}{d \theta^k}L_\lambda ^{-\frac{k}{2}}f(\theta) &=&
\frac{d}{d\theta}\int_0^\pi
f(\varphi)\Big(R_\lambda^{k,k-1}(\theta,\varphi)-\frac{R^{k,k-1}(\theta,\varphi)}{(\sin\theta \sin\varphi)^\lambda}\Big)dm_\lambda(\varphi)\\
&\hspace{-4cm}+&\hspace{-2cm}\frac{d}{d\theta}\int_0^\pi f(\varphi)\frac{R^{k,k-1}(\theta,\varphi)}{(\sin\theta \sin\varphi)^\lambda}dm_\lambda(\varphi)\\
&\hspace{-5cm}=&\hspace{-2.5cm}\lim_{\varepsilon \rightarrow
0^+}\int_{0,|\theta-\varphi|>\varepsilon}^\pi f(\varphi )
R_\lambda^k(\theta,\varphi)dm_\lambda(\varphi)-\lim_{\varepsilon
\rightarrow 0^+}\int_{0,|\theta-\varphi|>\varepsilon}^\pi f(\varphi
)\frac{R^{k}(\theta,\varphi)}{(\sin\theta \sin\varphi)^\lambda}dm_\lambda(\varphi)\\
&\hspace{-4cm}+&\hspace{-2cm}\lambda\frac{\cos\theta}{(\sin\theta)^{\lambda+1}}\int_0^\pi
\frac{f(\varphi)}{(\sin\varphi)^\lambda}R^{k,k-1}(\theta,\varphi)dm_\lambda(\varphi)+\frac{d}{d\theta}\int_0^\pi f(\varphi)\frac{R^{k,k-1}(\theta,\varphi)}{(\sin\theta \sin\varphi)^\lambda}dm_\lambda(\varphi)\\
&\hspace{-5cm}=&\hspace{-2.5cm}\lim_{\varepsilon \rightarrow
0^+}\int_{0,|\theta-\varphi|>\varepsilon}^\pi f(\varphi )
R_\lambda^k(\theta,\varphi)dm_\lambda(\varphi)+\gamma_kf(\theta),\quad \mbox{ a.e. }\theta
\in (0,\pi).
\end{eqnarray*}
Thus the proof of Theorem \ref{thprin} is complete.

\section{Proof of Theorem \ref{oscilacion}}

In order to show Theorem \ref{oscilacion} we need to improve Lemma
\ref{lema} as follows,
\begin{lema}\label{lemosc}
Let $k\in\mathbb N$. If $R^k$ and $A_2$ are defined as in Lemma
\ref{lema}, then
$$
R^k(\theta,\varphi) = M_k\frac{1}{\sin (\theta-\varphi)} +
O\left(\sqrt{\frac{1}{|\theta - \varphi|}}\right),\;\;
(\theta,\varphi) \in A_2,\quad  \theta \neq \varphi,
$$
for a certain $M_k \in \mathbb R$. Moreover, $M_k=0$ provided that
$k$ is even.
\end{lema}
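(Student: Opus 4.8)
The plan is to reduce the statement to an explicit one-variable computation, since $R^k(\theta,\varphi)$ depends only on $\omega=\theta-\varphi$. Recalling the function $H^k$ introduced in Step 3 and the interchanges of differentiation and integration justified there (see in particular (\ref{k-1})), one has $R^k(\theta,\varphi)=\frac{1}{2\pi\Gamma(k)}(H^k)'(\theta-\varphi)$ for $\theta\neq\varphi$, so it suffices to describe the behaviour of $(H^k)'(\omega)$ as $\omega\to 0$. Since, as we shall see, $H^k$ is an elementary function on $(-2\pi,2\pi)\setminus\{0\}$, no distributional argument is needed for this lemma.

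First I would obtain a closed form for $H^k$. Expanding the Poisson kernel, $\frac{1-r^2}{1-2r\cos\omega+r^2}-1=2\sum_{n\ge1}r^n\cos(n\omega)$ for $0<r<1$, and using $\partial_\omega^{k-1}\cos(n\omega)=n^{k-1}\cos(n\omega+(k-1)\pi/2)$ together with $\int_0^1(\log\frac{1}{r})^{k-1}r^{n-1}\,dr=(k-1)!/n^k$, integration term by term (exactly as for $H^1$ in Step 3) gives
$$
H^k(\omega)=2(k-1)!\sum_{n\ge1}\frac{\cos(n\omega+(k-1)\pi/2)}{n},\qquad \omega\in(-2\pi,2\pi)\setminus\{0\}.
$$
When $k$ is odd the phase $(k-1)\pi/2$ is an integer multiple of $\pi$, so up to the sign $(-1)^{(k-1)/2}$ the series equals $\sum_{n\ge1}\cos(n\omega)/n=-\frac{1}{2}\log\bigl(2(1-\cos\omega)\bigr)$; when $k$ is even the phase is an odd multiple of $\pi/2$, so up to a sign it equals $\sum_{n\ge1}\sin(n\omega)/n$, the sawtooth function, which equals $(\pi-\omega)/2$ on $(0,2\pi)$ and is odd and $2\pi$-periodic.

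Next I would differentiate in $\omega$. For $k$ odd this yields $(H^k)'(\omega)=-(-1)^{(k-1)/2}(k-1)!\,\cot(\omega/2)$, and I would invoke the elementary identity $\cot(\omega/2)=\frac{2}{\sin\omega}-\tan(\omega/2)$; since $\tan(\omega/2)$ is bounded on $A_2$ (where $|\theta-\varphi|$ stays bounded), this gives $R^k(\theta,\varphi)=\frac{M_k}{\sin(\theta-\varphi)}+O(1)$ with $M_k=-(-1)^{(k-1)/2}/\pi\neq0$. For $k$ even, the sawtooth is affine with slope $-\frac{1}{2}$ on $(-2\pi,0)$ and on $(0,2\pi)$, so $(H^k)'$ is constant near $\omega=0$ and hence $R^k$ is bounded on $A_2\setminus\{\theta=\varphi\}$, which gives the assertion with $M_k=0$. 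Finally, because $|\theta-\varphi|$ is bounded on $A_2$, every bounded term there is $O\bigl(1/\sqrt{|\theta-\varphi|}\bigr)$, so the estimates take the form claimed in the lemma.

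The essential content is the explicit summation of these (non-lacunary) trigonometric series and the half-angle identity; the only points that require care are the justification of the term-by-term manipulations at the level of $H^k$ for each fixed $\omega\neq0$ and the identity $R^k=\frac{1}{2\pi\Gamma(k)}(H^k)'$, but these are of exactly the same nature as the arguments already carried out in Steps 1 and 3 (and in Lemma \ref{derint1}). Accordingly I expect the main obstacle to be bookkeeping rather than a genuinely new difficulty.
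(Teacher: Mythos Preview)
Your proposal is correct and takes a genuinely different route from the paper's proof.

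The paper works directly with the integral representation of $R^k$: it expands $\partial_\theta^k(1/\Delta_r)$ via the $(s,i,j)$--decomposition (\ref{derivDr}), bounds the contribution of each piece, and isolates the leading singularity from the terms with $j=2s-k$. For odd $k$ this yields $M_k/\sin(\theta-\varphi)$ with $M_k$ expressed as a sum of Beta--type integrals; for even $k$ the same computation produces $M_k/|\sin(\theta-\varphi)|$, and the paper then argues \emph{indirectly} that $M_k=0$ by invoking Theorem~\ref{thprin}: were $M_k\neq 0$, the principal value in (\ref{A1}) could not exist. Your argument instead reduces everything to the single variable $\omega=\theta-\varphi$, computes $H^k$ in closed form through the series $\sum_{n\ge1}\cos(n\omega+(k-1)\pi/2)/n$ (equivalently, via $\operatorname{Re}\bigl(i^{k-1}\operatorname{Li}_1(e^{i\omega})\bigr)$), and reads off $(H^k)'$ explicitly. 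This is more elementary and self-contained: it yields the exact constant $M_k=-(-1)^{(k-1)/2}/\pi$ for odd $k$, and for even $k$ shows directly that $R^k$ is bounded on $A_2$ (hence $M_k=0$) without appealing to Theorem~\ref{thprin}. The price you pay is the justification of the term-by-term manipulations; note that your reference ``exactly as for $H^1$ in Step~3'' is slightly misplaced, since there $H^1$ is computed by a direct primitive rather than by Fourier series. A clean way to fill this gap is to observe that for $k\ge 2$ Fubini gives $G(\omega)=\int_0^1(\log\frac{1}{r})^{k-1}(P_r(\omega)-1)\frac{dr}{r}=2(k-1)!\,\operatorname{Re}\operatorname{Li}_k(e^{i\omega})$ (the series being absolutely convergent), and then use the analyticity of $\operatorname{Li}_k$ on $\mathbb{C}\setminus[1,\infty)$ together with $\frac{d}{d\omega}\operatorname{Li}_m(e^{i\omega})=i\,\operatorname{Li}_{m-1}(e^{i\omega})$ to differentiate down to $\operatorname{Li}_1$; this avoids any delicate conditional-convergence issue.
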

\begin{proof} According to (\ref{derivDr}) with $\lambda=t=0$ we have that
$$
\frac{\partial}{\partial \theta^k}\left(\frac{1}{\Delta_r}\right) =
\sum_{s,i,j} c_{k,s,i,j}\frac{r^{i+j}a^ib^j}{\Delta_r^{1+s}},
$$
where $a=\cos(\theta-\varphi)$, $b=-\sin(\theta-\varphi)$, and
$c_{k,s,i,j} \neq 0$ only if $s=1,\ldots,k$, $j \geq 2s-k$ and
$i+j=s$.

Note firstly that
$$
\int_0^{\frac{1}{2}}
\left(\log\frac{1}{r}\right)^{k-1}(1-r^2)\left|\frac{\partial}{\partial
\theta^k}\left(\frac{1}{1+r^2-2r\cos(\theta-\varphi)}\right)\right|
\frac{dr}{r} \leq C, \quad \theta,\varphi \in (0,\pi).
$$
Also, we have
\begin{eqnarray*}
\int_{\frac{1}{2}}^1 \left(\log\frac{1}{r}\right)^{k-1}(1-r^2)\frac{r^{i+j}|a|^i|b|^j}{\Delta_r^{1+s}}\frac{dr}{r} &\leq& C\int_{\frac{1}{2}}^1 \frac{(1-r)^k|b|^j}{((1-r)^2 + \Delta)^{1+s}}dr \\
&\leq& C \frac{|b|^j}{\Delta^{s-\frac{k}{2}+\frac{1}{2}}}\int_0^{\frac{1}{2\sqrt{\Delta}}}\frac{u^k}{(1+u)^{2+2s}}du\\
&\leq& C\frac{|b|^j}{\Delta^{\frac{j}{2}+\frac{1}{4}}}\int_0^{\frac{1}{2\sqrt{\Delta}}}\frac{u^{2s-j+\frac{1}{2}}}{(1+u)^{2+2s}}du \\
&\leq&C\frac{1}{\sqrt{|\theta-\varphi|}},\quad  (\theta,\varphi) \in
A_2,\;\; \theta \neq \varphi,
\end{eqnarray*}
provided that $s=1,\ldots,k$, $i+j=s$,  $j > 2s-k$.

Assume now $s=1,\ldots,k$, $i+j=s$,  $j= 2s-k$. By using the mean
value theorem we get
\begin{multline*}
\int_{\frac{1}{2}}^1\left(\log\frac{1}{r}\right)^{k-1}(1-r^2)\frac{r^{i+j}a^ib^j}{\Delta_r^{1+s}}\frac{dr}{r} \\
= 2\int_{\frac{1}{2}}^1 \frac{(1-r)^k}{((1-r)^2+\Delta)^{1+s}}dr
a^ib^j +
O\left(\frac{1}{\sqrt{|\theta-\varphi|}}\right),\;\;\theta,\varphi\in
A_2,\;\; \theta \neq \varphi.
\end{multline*}

Moreover, since $2s-k\ge 0$,
$$
\int_{\frac{1}{2}}^1 \frac{(1-r)^k}{((1-r)^2+\Delta)^{1+s}}dr = \frac{1}{\Delta^{s-\frac{k}{2}+\frac{1}{2}}}\left(\int_0^\infty \frac{u^kdu}{(1+u^2)^{1+s}} - \int_{\frac{1}{2\sqrt{\Delta}}}^\infty \frac{u^kdu}{(1+u^2)^{1+s}}\right)
$$
and
\begin{eqnarray*}
\frac{|a|^i|b|^j}{\Delta^{s-\frac{k}{2}+\frac{1}{2}}}\int_{\frac{1}{2\sqrt{\Delta}}}^\infty \frac{u^kdu}{(1+u^2)^{1+s}} &\leq& C \frac{1}{\Delta^{1/4}}\int_{\frac{1}{2\sqrt{\Delta}}}^\infty \frac{u^{k+\frac{1}{2}}du}{(1+u)^{2+s}} \\
&\leq& \frac{C}{\Delta^{\frac{1}{4}}} \leq
\frac{C}{|\theta-\varphi|^{\frac{1}{2}}},\quad \theta,\varphi \in
A_2,\;\theta \neq \varphi.
\end{eqnarray*}
Also, if $k$ is odd, we have that
$$
\frac{a^ib^j}{\Delta^{s-\frac{k}{2}+\frac{1}{2}}} =
\frac{(\cos(\theta-\varphi))^i(-\sin(\theta-\varphi))^j}{(2(1-\cos(\theta-\varphi)))^{\frac{j}{2}+\frac{1}{2}}}
=- \frac{1}{\sin(\theta-\varphi)} +
O\left(\frac{1}{|\theta-\varphi|^{\frac{1}{2}}}\right), \quad
\theta,\varphi \in A_2, \; \theta \neq \varphi.
$$
By combining the above estimates we conclude that
$$
R^k(\theta,\varphi) = M_k\frac{1}{\sin(\theta-\varphi)} +
O\left(\frac{1}{|\theta-\varphi|^{\frac{1}{2}}}\right), \quad
\theta,\varphi \in A_2, \; \theta \neq \varphi,
$$
for a certain $M_k \in \mathbb R$, for every $k$ odd.

Assume now that $k$ is even. We get
$$
\frac{a^ib^j}{\Delta^{s-\frac{k}{2}+\frac{1}{2}}} =
\frac{1}{|\sin(\theta-\varphi)|} +
O\left(\frac{1}{|\theta-\varphi|^{\frac{1}{2}}}\right), \quad
\theta,\varphi \in A_2, \; \theta \neq \varphi.
$$
Hence, from Lemma \ref{lema} we deduce that, for every $\theta,\varphi
\in A_2, \; \theta \neq \varphi$,
$$
R_\lambda^k(\theta,\varphi)=M_k\frac{1}{|\sin(\theta-\varphi)|(\sin\theta\sin\varphi)^\lambda}+O\Big(\frac{1}{(\sin\theta\sin\varphi)^{\lambda+1/2}}
\Big(1+\sqrt{\frac{\sin\theta}{|\theta-\varphi|}}\Big)\Big).
$$
By virtue of Theorem \ref{thprin}, $M_k=0$ because
$$
\lim_{\varepsilon\to
0^+}\int_{\theta/2,\,|\theta-\varphi|>\varepsilon}^{3\theta/2}\frac{1}{|\sin(\theta-\varphi)|}(\sin\varphi)^\lambda
d\varphi,
$$
does not exist for every $\theta\in (0,\pi/2)$.
\end{proof}

From Lemmas \ref{lema} and \ref{lemosc} we deduce that,
\begin{equation}\label{Rtcomp}
R_\lambda^k(\theta,\varphi)\!=\!\left\{\!\!\begin{array}{l}
                                    O\left((\sin\varphi)^{-(2\lambda+1)}\right),\;\; (\theta,\varphi) \in
                                    A_1;\\
                                    \displaystyle \frac{M_k}{(\sin\theta\sin\varphi)^\lambda \sin(\theta-\varphi)}+ O\!\left(\!\frac{1}{(\sin\varphi)^{2\lambda+1}}\!\left(\!\!1+\!\sqrt{\frac{\sin\varphi}{|\theta-\varphi|}}\!\right)\!\!\right)\!, \; (\theta,\varphi)\in A_2,\;\theta \neq \varphi;\\
                                    O\left((\sin\theta)^{-(2\lambda+1)}\right),\;\; (\theta,\varphi) \in A_3.\\
                                    \end{array}\right.
\end{equation}
By using (\ref{Rtcomp}) we can prove Theorem \ref{oscilacion} by
proceeding as in the proof of \cite[Proposition 8.1]{BMTU}.

\section{Appendix}
In this appendix we present the results we need about differentiation under the integral sign. We think that these results are wellknown but we have not found a exact reference (only the unpublished notes \cite{Cheng}). Then we prefer to include here a  proof of the result in the form we use.
We look for conditions on a function $f$ defined on $\mathbb{R}\times \mathbb{R}$ in order that the formula
$$
\frac{\partial}{\partial x}\int_\mathbb{R}f(x,y)dy=\int_\mathbb{R}\frac{\partial}{\partial x} f(x,y)dy,\quad \mbox{ a.e. }x\in \mathbb{R},
$$
holds.

In the following we establish conditions on a function $f$ in order that distributional and classical derivatives of $f$ coincide.

\begin{lema}\label{derint2}
Let $-\infty \leq a<b\leq +\infty$. Assume that $f$ is a continuous function on $I\times I$, where $I=(a,b)$, such that

(i) For every $y\in I$, the function $\frac{\partial}{\partial x}f(x,y)dy$ is continuous on $I\setminus \{y\}$, where the derivative is understood in the classical sense.

(ii) For every $y\in I$ and every compact subset $K$ of $I$, $\displaystyle\int_K|f(x,y)|dx<+\infty$, and $$\displaystyle\int_K\left|\frac{\partial f}{\partial x}(x,y)\right|dx<+\infty. $$

Then, $D_xf(x,y)=\frac{\partial}{\partial x}f(x,y)$, for every $y\in I$. Here, as above, $D_xf(x,y)$ denotes the distributional derivative respect to $x$ of $f$.
\end{lema}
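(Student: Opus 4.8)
The plan is to fix $y\in I$, set $g(x):=f(x,y)$, and verify directly from the definition of distributional derivative that the classical derivative $g'(x)=\frac{\partial}{\partial x}f(x,y)$, which exists and is continuous on $I\setminus\{y\}$ by (i), represents $D_xg$. So I would take an arbitrary test function $\phi\in C_c^\infty(I)$, let $K\subset I$ be a compact interval containing $\mathrm{supp}\,\phi$, and aim to show $\int_I g(x)\phi'(x)\,dx=-\int_I g'(x)\phi(x)\,dx$; note that both integrands are absolutely integrable over $K$ thanks to (ii) (and the boundedness of the continuous functions $g,\phi,\phi'$ on $K$).

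Next I would isolate the bad point $y$ by truncation. For small $\varepsilon>0$ I would split $\int_I g\phi'$ into the parts over $\{x<y-\varepsilon\}$, over $\{x>y+\varepsilon\}$, and over $\{|x-y|\le\varepsilon\}$; the last piece tends to $0$ as $\varepsilon\to0^+$ because $g\phi'$ is continuous with compact support, hence bounded. On each of the first two intervals $g$ is of class $C^1$ (again by (i) together with the joint continuity of $f$), so an ordinary integration by parts applies there. Since $\phi$ vanishes near the endpoints $a$ and $b$, the only boundary terms that survive are $g(y-\varepsilon)\phi(y-\varepsilon)$ and $-g(y+\varepsilon)\phi(y+\varepsilon)$, leaving $\int_{\{|x-y|>\varepsilon\}} g\phi' = g(y-\varepsilon)\phi(y-\varepsilon)-g(y+\varepsilon)\phi(y+\varepsilon)-\int_{\{|x-y|>\varepsilon\}} g'\phi$.

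Finally I would let $\varepsilon\to0^+$. By the continuity of $f$ on $I\times I$ and of $\phi$, both boundary terms converge to $g(y)\phi(y)$, so their difference tends to $0$; and $\int_{\{|x-y|>\varepsilon\}} g'\phi\to\int_I g'\phi$ by dominated convergence, using the local integrability of $g'$ guaranteed by (ii). Combining the three limits gives $\int_I g\phi'=-\int_I g'\phi$ for every $\phi\in C_c^\infty(I)$, which is exactly $D_xf(\cdot,y)=\frac{\partial}{\partial x}f(\cdot,y)$, and since $y\in I$ was arbitrary the lemma follows.

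The delicate point — the one place where care is genuinely needed — is that hypothesis (i) does not say that $g'$ extends continuously across $x=y$, only that it is integrable near $y$ by (ii); hence one cannot integrate by parts across $y$ in one stroke, and the $\varepsilon$-truncation followed by a dominated-convergence argument is precisely the device that circumvents this. Equally essential is the assumed joint continuity of $f$: it is what forces the limit of the boundary terms to cancel, and without it $g$ could jump at $y$ and contribute an extra Dirac mass to $D_xf$.
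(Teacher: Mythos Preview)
Your proposal is correct and follows essentially the same approach as the paper: excise an $\varepsilon$-neighborhood of the singular point $x=y$, integrate by parts on each side, use the continuity of $f$ to make the boundary terms cancel in the limit, and use the local integrability from (ii) to recover the full integral of $\phi\,\partial_x f$. Your write-up is in fact somewhat more explicit than the paper's (you separately account for the integral over $\{|x-y|\le\varepsilon\}$ and invoke dominated convergence), but the argument is the same.
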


\begin{proof}
Let $g \in C_c^\infty (I)$. We can write
\begin{eqnarray*}
<D_xf(x,y),g (x)>&=&-\lim_{\varepsilon \rightarrow 0^+}\left(\int_a^{y-\varepsilon}+\int_{y+\varepsilon }^b\right)g'(x)f(x,y)dx\\
&\hspace{-6cm}=&\hspace{-3cm}\lim_{\varepsilon \rightarrow 0^+}\left[-g(y-\varepsilon )f(y-\varepsilon ,y)+
g(y+\varepsilon )f(y+\varepsilon ,y)+\left(\int_a^{y-\varepsilon}+\int_{y+\varepsilon }^b\right) g(x)\frac{\partial f}{\partial x}(x,y)dx\right]\\
&\hspace{-6cm}=&\hspace{-3cm}\int_a^bg(x)\frac{\partial f}{\partial x}(x,y)dx,\quad y\in I.
\end{eqnarray*}
Then, $D_xf(x,y)=\frac{\partial f}{\partial x}(x,y)$, $y\in I$.
\end{proof}

The differentiations under the integral sign that we have made in the proof of our results can be justified by using the following one.

\begin{lema}\label{derint1}
Suppose that $f$ is a measurable function defined on $\mathbb{R}\times \mathbb{R}$ that satisfies the fo\-llo\-wing conditions:

(i) for every compact subset $K$ of $\mathbb{R}$, $\int_K\int_\mathbb{R}|f(x,y)|dydx<\infty $, and

(ii) there exists a measurable function $g$ on $\mathbb{R}\times \mathbb{R}$ such that
$\int_K\int_\mathbb{R}|g(x,y)|dydx<\infty$, for every compact subset $K$ of $\mathbb{R}$, and that the distributional derivative
$D_xf(\cdot , y)$ is represented by $g(\cdot, y)$, for every $y\in \mathbb{R}$.

Then,
$$
\frac{\partial}{\partial x}\int_\mathbb{R}f(x,y)dy=\int_\mathbb{R}\frac{\partial }{\partial x}f(x,y)dy,\quad \mbox{ a.e. }x\in \mathbb{R},
$$
where the derivatives are understood in the classical sense.
\end{lema}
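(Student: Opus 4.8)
The plan is to reduce the statement to the one-dimensional principle that a locally integrable function on $\mathbb{R}$ whose distributional derivative is again locally integrable has a locally absolutely continuous representative, and then to transfer the derivative through the $y$-integration by testing against $C_c^\infty$ functions and applying Fubini's theorem twice.

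First I would set $F(x)=\int_\mathbb{R}f(x,y)\,dy$ and $G(x)=\int_\mathbb{R}g(x,y)\,dy$. Conditions (i) and (ii) say exactly that $F,G\in L^1_{\mathrm{loc}}(\mathbb{R})$; in particular both integrals converge for almost every $x$. For each fixed $y$, (ii) asserts that the distributional derivative of $x\mapsto f(x,y)$ equals $g(\cdot,y)\in L^1_{\mathrm{loc}}(\mathbb{R})$, hence $f(\cdot,y)$ agrees a.e.\ with a locally absolutely continuous function whose classical derivative is $g(x,y)$ for a.e.\ $x$. Therefore $\int_\mathbb{R}\frac{\partial}{\partial x}f(x,y)\,dy=\int_\mathbb{R}g(x,y)\,dy=G(x)$ for a.e.\ $x$, and it only remains to show that (the locally absolutely continuous representative of) $F$ has classical derivative $G$ a.e.

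To this end I would prove $F'=G$ in $\mathcal{D}'(\mathbb{R})$. Fix $\phi\in C_c^\infty(\mathbb{R})$ with $\operatorname{supp}\phi'\subset K$, $K$ compact. Then
$$
\langle F',\phi\rangle=-\int_\mathbb{R}F(x)\phi'(x)\,dx=-\int_K\Big(\int_\mathbb{R}f(x,y)\,dy\Big)\phi'(x)\,dx .
$$
Since $\phi'$ is bounded, (i) yields $\int_K\int_\mathbb{R}|f(x,y)\phi'(x)|\,dy\,dx<\infty$, so Fubini's theorem lets me rewrite this as $-\int_\mathbb{R}\big(\int_\mathbb{R}f(x,y)\phi'(x)\,dx\big)\,dy$; for each $y$ the inner integral equals $\int_\mathbb{R}g(x,y)\phi(x)\,dx$ by the very definition of the distributional derivative $D_xf(\cdot,y)=g(\cdot,y)$. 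A second application of Fubini, justified this time by (ii), gives $\langle F',\phi\rangle=\int_\mathbb{R}\big(\int_\mathbb{R}g(x,y)\,dy\big)\phi(x)\,dx=\langle G,\phi\rangle$. Hence $F'=G$ distributionally, and since $G\in L^1_{\mathrm{loc}}(\mathbb{R})$ the one-dimensional principle recalled above gives that $F$ equals a.e.\ a locally absolutely continuous function whose classical derivative is $G$ a.e. Combining this with the reduction in the previous paragraph proves the lemma.

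I expect the main point to watch is the two invocations of Fubini's theorem: one needs the joint measurability of $(x,y)\mapsto f(x,y)\phi'(x)$ and of $(x,y)\mapsto g(x,y)\phi(x)$ together with the a priori local bounds from (i) and (ii), and one needs $y\mapsto\int_\mathbb{R}f(x,y)\phi'(x)\,dx$ to be measurable — all of which are routine from the hypotheses. A secondary point to state carefully is the passage between distributional and classical derivatives, used both for $f(\cdot,y)$ and for $F$; this is the standard characterization of $W^{1,1}_{\mathrm{loc}}(\mathbb{R})$, in the same spirit as Lemma \ref{derint2}.
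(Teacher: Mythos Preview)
Your argument is correct and follows essentially the same route as the paper's proof: define the integrated function, compute its distributional derivative by testing against $C_c^\infty$ functions and applying Fubini (justified by (i) and (ii)), and then invoke the one-dimensional fact that an $L^1_{\mathrm{loc}}$ function with $L^1_{\mathrm{loc}}$ distributional derivative is (a.e.) locally absolutely continuous with the expected classical derivative. The only cosmetic difference is that the paper cites Schwartz \cite[Chap.~2, \S5, Theorem~V]{Schw} for the passage from distributional to classical derivative, whereas you phrase it as the standard $W^{1,1}_{\mathrm{loc}}$ characterization; your version is also more explicit about the two uses of Fubini.
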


\begin{proof} We define the function $h(x)=\int_\mathbb{R}f(x,y)dy$, $x\in \mathbb{R}$. By (i) $h$ defines a regular distribution that
we  continue denoting by $h$. According to \cite[Chap. 2, \S 5, Theorem V]{Schw}, we have that
$$
\frac{\partial}{\partial x}f(x,y)=g(x,y),\quad \mbox{ a.e. }(x,y)\in \mathbb{R}\times\mathbb{R},
$$
where the derivative is understood in the classical sense. Moreover, if $F \in C_c^\infty (\mathbb{R})$, then
$$
<D_xh,F> = \int_\mathbb{R}F(x)\int_\mathbb{R}\frac{\partial f}{\partial x}(x,y)dydx.
$$

Hence, $D_xh(x)=\int_\mathbb{R}\frac{\partial f}{\partial x}(x,y)dy$ in the distributional sense. By using again \cite[Chap. 2, \S 5, Theorem V]{Schw} we conclude that
$$
\frac{\partial}{\partial x}h(x)=\int_\mathbb{R}\frac{\partial }{\partial x}f(x,y)dy,\quad\mbox{ a.e. }x\in \mathbb{R}.
$$
Thus the proof is completed.
\end{proof}


\begin{thebibliography}{10}

\bibitem{BMTU}
D. Buraczewski, T. Mart\'{\i}nez, J.~L. Torrea, and R. Urban,
  \emph{On the {R}iesz transform associated with the ultraspherical
  polynomials}, J. Anal. Math. \textbf{98} (2006), 113--143.

\bibitem{BMT}
D. Buraczewski, T. Mart\'{\i}nez, and J.~L. Torrea,
  \emph{Calder\'on-Zygmund operators associated to ultraspherical expansions},
  Canad. J. Math. \textbf{59} (2007), no.~6, 1223--1244.

\bibitem{CJRW}
J.~T. Campbell, R.~L. Jones, K. Reinhold, and M. Wierdl,
  \emph{Oscillation and variation for the {H}ilbert transform}, Duke Math. J.
  \textbf{105} (2000), no.~1, 59--83.

\bibitem{CJRW1}
J.~T. Campbell, R.~L. Jones, K. Reinhold, and M. Wierdl, \emph{Oscillation and variation for singular integrals in higher
  dimensions}, Trans. Amer. Math. Soc. \textbf{355} (2003), no.~5, 2115--2137.

\bibitem{Cheng} S. Cheng, \emph{Differentiation under the integral sign using weak derivatives}, unpublished manuscript. http://www.gold-saucer.org/math

\bibitem{GilTo}
T.~A. Gillespie and J.~L. Torrea, \emph{Dimension free estimates for the
  oscillation of {R}iesz transforms}, Israel J. Math. \textbf{141} (2004),
  125--144.
\bibitem{Gra}
L. Grafakos, \emph{Classical and modern {F}ourier analysis}, Pearson
  Education, Inc., Upper Saddle River, N.J., 2004.

\bibitem{RW}
R.~L. Jones and G. Wang, \emph{Variation inequalities for the {F}ej\'er
  and {P}oisson kernels}, Trans. Amer. Math. Soc. \textbf{356} (2004), no.~11,
  4493--4518.

\bibitem{Mu} B.~Muckenhoupt, \emph{Transplantation theorems and multiplier theorems for Jacobi series}, Mem. Amer. Math. Soc. \textbf{64} (1986), no.
356.

\bibitem{MS}
B.~Muckenhoupt and E.~M. Stein, \emph{Classical expansions and their relation
  to conjugate harmonic functions}, Trans. Amer. Math. Soc. \textbf{118}
  (1965), 17--92.

\bibitem{Ro} S. Roman, \emph{The formula of Fa\`a di Bruno}, Amer. Math. Monthly {\bf 87} (1980), 805--809.

\bibitem{Schw} {L. Schwartz}, \emph{\em Th\'eorie des distributions}, Hermann, Par\'{\i}s, 1973.

\bibitem{Stein}
E.~M. Stein, \emph{Topics in harmonic analysis related to the
  {L}ittlewood-{P}aley theory.}, Annals of Mathematics Studies, No. 63,
  Princeton University Press, Princeton, N.J., 1970.

\bibitem{Sz}
G. Szeg{\H{o}}, \emph{Orthogonal polynomials}, fourth ed., American
  Mathematical Society, Providence, R.I., 1975, American Mathematical Society,
  Colloquium Publications, Vol. XXIII.
\end{thebibliography}

\end{document}